\documentclass[reqno]{amsart}
\usepackage{amssymb}
\usepackage[dvips]{epsfig}
\usepackage{graphicx}
\usepackage{color}
\usepackage{amsmath}
\usepackage{amssymb}
\usepackage{amsfonts}
\usepackage{amsthm}
\usepackage{bbm}
\usepackage{tikz}

\newcommand{\R}{\mathbb R}

\newcommand{\Z}{\mathbb Z}

\newcommand{\C}{\mathbb C}

\newcommand{\N}{\mathbb{N}}

\newtheorem{thm}{Theorem}[section]
\newtheorem{lem}[thm]{Lemma}

\theoremstyle{remark}
\newtheorem{rem}{\bf Remark}[section]
\theoremstyle{definition}
\newtheorem{defn}[thm]{Definition}

\numberwithin{equation}{section}

\begin{document}

\title[A MSA proof of the power-law localization]{A Multi-scale Analysis Proof of the Power-law Localization for Random Operators on ${\Z}^d$}
\author{Yunfeng Shi}
\address[Y. Shi] {College of Mathematics,
Sichuan University,
Chengdu 610064,
China}
\email{yunfengshi@scu.edu.cn, yunfengshi18@gmail.com}

\date{\today}

\keywords{Power-law localization,  Multi-scale analysis, Random operators, Polynomial long-range hopping}

\begin{abstract}
In this paper we give a multi-scale analysis proof of the \textit{power-law} localization for random operators on ${\Z}^d$ for \textit{arbitrary} $d\geq1$. 


\end{abstract}

\maketitle






\maketitle
\section{Introduction}
Since the seminal work of Anderson \cite{And58}, the investigation of the localization for noninteracting quasi-particles in random media has attracted great attention  in physics and mathematics community.
In mathematics the first rigorous proof of the localization for random operators was due to Goldsheid-Molchanov-Pastur \cite{GMP77}. They obtained the pure point spectrum for a class of $1D$ continuous random Schr\"odinger operators. In higher dimensions,  Fr\"ohlich-Spencer \cite{FS83} proved, either at high disorder or low energy, the absence of diffusion for some random Schr\"odinger operators by developing the celebrated multi-scale analysis (MSA) method.
Based on the MSA method of Fr\"ohlich-Spencer, \cite{FMSS85,DLS85,SW86} finally obtained the Anderson localization at either high disorder or extreme energy. We should remark that the method of \cite{FS83} was  simplified  and extended by von Dreifus-Klein \cite{vDK89} via introducing a scaling argument. 
Later, the method of \cite{vDK89} was generalized by Klein \cite{Kle94} to prove the Anderson localization for random operators  with the \textit{exponential} long-range hopping. Finally, we want to mention that the MSA method has been strengthened to establish the Anderson localization for random Schr\"odinger operators with Bernoulli  potentials \cite{CKM87,BK05,DS19} \footnote{Very recently, Jitomirskaya-Zhu \cite{JZ19} provided a delicate proof of the  Anderson  localization for the $1D$ random Schr\"odinger operators with Bernoulli potentials using ideas from \cite{Jit99}.
}.

An alternative method for the proof of the localization for random operators, known as the fractional moment method (FMM), was developed by Aizenman-Molchanov \cite{AM93}. This remarkable method also has numerous applications in localization problems \cite{AWB}. As one of its main applications, the FMM was enhanced to prove the first  dynamical localization \cite{Aiz94} for random operators on $\Z^d$. 
Another key application of the FMM in \cite{AM93} is a proof of  the \textit{power-law} localization for  random operators (on $\Z^d$) with polynomially decaying long-range hopping.  Later, the \textit{power-law} localization for $1D$ polynomial long-range hopping random operators was also  established in \cite{JM99} via the method of trace class perturbations.  However, there is simply no MSA proof of the \textit{power-law} localization for {polynomial} long-range hopping random operators, as far as I know. This is the main motivation of the present work. We remark that the Green's functions estimate in the FMM  requires a mild condition (such as the H\"older continuity) on the probability distribution of the potential, while it does not apply to purely singular potentials, such as the Bernoulli ones.  In order to deduce localization using the FMM,  the \textit{absolute continuity} of the measure was even needed in \cite{AM93} so that the Simon-Wolff criterion \cite{SW86} can work.

In this paper we develop a MSA scheme to handle random operators with the \textit{polynomial} long-range hopping and  H\"older continuous distributed potentials (including some \textit{singular continuous} ones).  Although the Bernoulli potentials are not considered in the paper, the suggested method is a promising candidate to be applicable in handling operators with Bernoulli potentials and the \textit{polynomial} long-range hopping. In addition, we think  our formulations in this paper may have  applications in localization problems for other models. In fact, in a forthcoming paper \cite{Shifc} we develop a MSA scheme to study some quasi-periodic operators with \textit{polynomial} long-range hopping.

Our proof is  based essentially on  Fr\"ohlich-Spencer type MSA method \cite{FS83}. In particular, it employs heavily the simplified  MSA method of von Dreifus-Klein \cite{vDK89} (see also \cite{K08}). However, one of the key ingredients in our proof is different from that of \cite{FS83,vDK89} in which the \textit{geometric resolvent identity} was iterated  to obtain the  \textit{exponentially}  decaying of off-diagonal elements of Green's functions. Instead, we directly estimate the \textit{left} inverses of the truncated matrix via information on  small scales Green's functions  and a \textit{prior} $\ell^2$ norm bound of the inverse itself.  This method  was initiated by Kriecherbauer \cite{Kri98} to deal with matrices with \textit{sub-exponentially} decaying (even more general cases) off-diagonal elements, and largely extended by Berti-Bolle \cite{BB13} to study matrices with \textit{polynomially} decaying off-diagonal elements in the context of nonlinear PDEs. 
 Once the Green's functions estimate was established, the proof of the \textit{power-law} localization can be  accomplished with the  Shnol's Theorem. 

The structure of the paper is as follows.  The  \S2 contains our main results on Green's functions estimate (Theorem \ref{msa}) and the \textit{power-law} localization (Theorem \ref{mthm}). The  proof of Theorem \ref{msa} is given in \S3.  In \S4,  the verification of the assumptions (\textbf{P1}) and (\textbf{P2}) in Theorem \ref{msa} is presented. Moreover, the whole MSA argument on Green's functions is also proved there.  In \S5, the proof of Theorem \ref{mthm} is finished. Some useful estimates are included in the appendix.
\section{Main Results}
 Here is the set-up for our main results.
 \subsection{Random Operators with the Polynomial Long-range Hopping}
 Define on $\Z^d$ the {polynomial} long-range hopping $\mathcal{T}$ as
\begin{align}\label{sp2}
\mathcal{T}(m,n)=\left\{\begin{aligned}
&|m-n|^{-r},\ {\rm for}\   m\neq n\ {\rm with}\ m,n\in{\Z}^d,\\
&0,\  {\rm for\ } m=n\in{\Z}^d,
\end{aligned}\right.
\end{align}
where $|n|=\max\limits_{1\leq i\leq d}|n_i|$ and $r>0$.

Let $\{V_\omega(n)\}_{n\in\mathbb{Z}^d}$ be independent identically distributed (\textit{i.i.d}) random variables (with the common probability distribution $\mu$) on some probability space $(\Omega,\mathcal{F}, \mathbb{P})$ ($\mathcal{F}$ a $\sigma$-algebra
on $\Omega$ and $\mathbb{P}$ a probability measure on $(\Omega,\mathcal{F})$).

Let  $\mathrm{supp}(\mu)=\{x:\ \mu(x-\varepsilon,x+\varepsilon)>0\ \mathrm{for}\  \mathrm{any}\  \varepsilon>0\}$ be the support of the common distribution $\mu.$ Throughout this paper we assume $d<r<\infty$ \footnote{
By Schur's test  and the self-adjointness of $\mathcal{T}$, we get (since $r>d$)
\begin{align*}
 \|\mathcal{T}\|\leq \sup_{m\in{\Z}^d}\sum_{n\neq m}|m-n|^{-r}\leq \sum_{n\in{\Z}^d\setminus\{0\}}|n|^{-r}<\infty,
 \end{align*}
where $\|\cdot\|$ is the standard operator norm on $\ell^2({\Z}^d)$.} and
 \begin{itemize}
 \item[$\bullet$]  ${\rm supp}(\mu)$ contains at least two points.
  \item[$\bullet$] ${\rm supp}(\mu)$ is \textit{compact}:  We have  $\mathrm{supp}(\mu)\subset[-{M},{M}]$ for some $M>0$ \footnote{
From \cite{K08}, we have for $\mathbb{P}$ almost all $\omega$,
\begin{align*}
\sup_{n\in{\Z}^d}|V_\omega(n)|\leq M.
\end{align*}
Thus we can assume $\sup\limits_{n\in{\Z}^d}|V_\omega(n)|\leq M$ for all $\omega\in\Omega$.
}.
\end{itemize}

In this paper we study the $dD$ random operators with the {polynomial} long-range hopping
\begin{align}\label{qps}
{H}_{\omega}=\lambda^{-1}\mathcal{T} + V_\omega(n)\delta_{nn'},\ \lambda\geq1,
\end{align}
where $\lambda$ is the coupling constant  for describing the effect of  disorder.

Under the above assumptions, $H_\omega$ is a \textit{bounded self-adjoint} operator on $\ell^2({\Z}^d)$ for each $\omega\in\Omega$.
Denote by $\sigma(H_\omega)$ the  spectrum of $H_\omega$. A well-known result due to Pastur \cite{Pas80} can imply that there exists a set $\Sigma$ (\textit{compact} and \textit{non-random}) such that for $\mathbb{P}$ almost all $\omega$, $\sigma(H_\omega)=\Sigma$.

\subsection{Sobolev Norms of a Matrix}
 Since we are dealing with matrices with polynomially decaying off-diagonal elements, the Sobolev norms introduced by Berti-Bolle \cite{BB13}  are useful.

Fix $s_0>d/2$ (s.t. the Sobolev embedding works).

Let $\langle k\rangle=\max\{1,|k|\}$ if $k\in{\Z}^d$. Define for $u=\{ u(k)\}\in{\C}^{{\Z}^d}$ and $s>0$ the Sobolev norm
\begin{align}\label{us}
\|u\|_s^2=C_0(s_0)\sum_{k\in{\Z}^d}|{u}(k)|^2{\langle k\rangle}^{2s},
\end{align}
where $C_0(s_0)>0$ is fixed so that (for $s\geq s_0$) 
\begin{align*}
\|u_1u_2\|_s\leq \frac{1}{2}\|u_1\|_{s_0}\|u_2\|_s+C(s)\|u_1\|_s\|u_2\|_{s_0}
\end{align*}
with $C(s)>0$, $C(s_0)=1/2$ and $(u_1u_2)(k)=\sum\limits_{k'\in{\Z}^d}{u}_1{(k-k')}{u}_2(k')$.


Let $X_1,X_2\subset{\Z}^d$ be finite sets. Define  $$\mathbf{M}^{X_1}_{X_2}=\left\{\mathcal{M}=(\mathcal{M}(k,k')\in \C)_{k\in X_1,k'\in X_2}\right\}$$ to be the set of all complex matrices with row indexes in $X_1$ and column indexes in $X_2$. If $Y_1\subset X_1,Y_2\subset X_2$, we write $\mathcal{M}^{Y_1}_{Y_2}=(\mathcal{M}(k,k'))_{k\in Y_1,k'\in Y_2}$ for any $\mathcal{M}\in\mathbf{M}^{X_1}_{X_2}$.

\begin{defn}\label{snorm}
Let $\mathcal{M}\in\mathbf{M}^{X_1}_{X_2}$. Define for $s\geq s_0$ the Sobolev norm of $\mathcal{M}$ as
\begin{align*}
\|\mathcal{M}\|_s^2=C_0(s_0)\sum_{k\in X_1-X_2}\left(\sup_{k_1-k_2=k}|\mathcal{M}(k_1,k_2)|\right)^2\langle k\rangle^{2s},
\end{align*}
where $C_0(s_0)>0$ is defined in \eqref{us}.
\end{defn}

\begin{rem}
 From this definition, we have $\|\mathcal{T}\|_{r_1}<\infty$ if $r_1<r-d/2$.
\end{rem}


For more details about Sobolev norms of matrices, we refer to  \cite{BB13}.
\subsection{Green's Functions Estimate}
The estimate on Green's functions plays a key role in spectral theory. In this subsection we  present the first main result on  Green's functions estimate.


For $n\in{\Z}^d$ and $L>0$, define the cube $\Lambda_L(n)=\{k\in{\Z}^d:\ |k-n|\leq L\}$. Moreover,   write $\Lambda_L=\Lambda_L(0)$. The volume of a finite set $\Lambda\subset{\Z}^d$
is defined to be $|\Lambda|=\# \Lambda$. We have $|\Lambda_L(n)|=(2L+1)^d$ ($L\in\N$) for example.

If $\Lambda\subset \Z^d$, denote ${H}_{\Lambda}=R_{\Lambda}{{H}_{\omega}}R_{\Lambda}$, where $R_{\Lambda}$ is the restriction operator. Define the Green's function (if it exists) as
\begin{align*}
G_{\Lambda}(E)=({{H}}_{\Lambda}-E)^{-1},\  E\in\R.
\end{align*}

 Let us introduce  \textbf{good} cubes in ${\Z}^d$.
\begin{defn}
Fix $\tau'>0$, $\delta\in (0,1)$ and $d/2<s_0\leq r_1<r-d/2$. We call $\Lambda_L(n)$ is $(E,\delta)$-\textbf{good} if  $G_{\Lambda_L(n)}(E)$  exists and satisfies
\begin{align*}
\|G_{\Lambda_L(n)}(E)\|_s\leq L^{\tau'+\delta{s}} \ \mathrm{for}\ \forall\  s\in[s_0,r_1].
\end{align*}
Otherwise, we call $\Lambda_L(n)$ is $(E,\delta)$-\textbf{bad}.
We call $\Lambda_{L}(n)$ an $(E,\delta)$-\textbf{good} (resp.  $(E,\delta)$-\textbf{bad}) $L$-cube if it is $(E,\delta)$-\textbf{good} (resp. $(E,\delta)$-\textbf{bad}).
\end{defn}

\begin{rem}\label{rgdec}
Let $\zeta\in (\delta,1)$ and $\tau'-(\zeta-\delta)r_1<0$. Suppose that $\Lambda_L(n)$ is $(E,\delta)$-\textbf{good}. Then we have for $L\geq L_0(\zeta,\tau',\delta,r_1,d)>0$ and $|n'-n''|\geq L/2$,
\begin{align}\label{gdec}
|G_{\Lambda_L(n)}(E)(n',n'')|
&\leq |n'-n''|^{-(1-\zeta)r_1}.
\end{align}
\end{rem}

Assume the following inequalities hold true\footnote{These inequalities will be explained in the proof of the \textbf{Coupling Lemma} in the following.}:
\begin{align}\label{para}
\left\{
\begin{aligned}
&-(1-\delta)r_1+\tau'+2s_0<0,\\
&-\xi r_1+\tau'+\alpha\tau+(3+\delta+4\xi)s_0<0,\\
&\alpha^{-1}(2\tau'+2\alpha\tau+(5+4\xi+2\delta)s_0)+s_0<\tau',\\
\end{aligned}
\right.
\end{align}
where $\alpha,\tau,\tau',r_1>1, \xi>0, s_0>d/2$ and $\delta\in(0,1)$.

Denote by $[x]$ the integer part of  some $x\in\R$. In what follows let $E$ be in an interval $I$ satisfying  $|I|\leq 1$  and $I\cap[-\|\mathcal{T}\|-M,\|\mathcal{T}\|+M]\neq\emptyset$. The main result on Green's functions estimate is as follows.
\begin{thm}[]\label{msa}
Suppose that $1+\xi<\alpha$ and  \eqref{para} holds true. Fix $p>\alpha d, J\in2\mathbb{N}$ and let $L=[l^\alpha]\in \N$ with $l\in \N$.
Then there exists $$\underline{l}_0=\underline{l}_0(\|\mathcal{T}\|_{r_1},M,J,\alpha,\tau,\xi,\tau',\delta, p, r_1,s_0,d)>0$$ such that, if $l\geq \underline{l}_0$,
\begin{itemize}
  \item[(\textbf{P1})]
$\mathbb{P}({\exists}\  E\in I \ {\rm s.t.}\ {\rm both}\  \Lambda_{l}(m)\  {\rm and}\  \Lambda_{l}(n)\ {\rm are\ }  (E,\delta)-{\bf bad})
  \leq l^{-2p}$
for all $|m-n|>2l,$
  \item[(\textbf{P2})] $\mathbb{P}({\rm dist}(\sigma(H_{\Lambda_{L}(m)}),\sigma(H_{\Lambda_{L}(n)}))\leq 2L^{-\tau})\leq L^{-2p}/2$ for all $|m-n|>2L$,
\end{itemize}
then we have
$$\mathbb{P}({\exists}\  E\in I \ {\rm s.t.}\  {\rm both}\ \Lambda_{L}(m)\  {\rm and}\  \Lambda_{L}(n)\ {\rm are\  }(E,\frac{1+\xi}{\alpha})-{\bf bad})
  \leq C(d)L^{-J(\alpha^{-1}p-d)}+L^{-2p}/2$$
for all $|m-n|>2L$.
\end{thm}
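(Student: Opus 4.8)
The plan is the classical von Dreifus--Klein induction step: combine a \emph{deterministic Coupling Lemma} (propagating Green's function bounds from scale $l$ to scale $L=[l^\alpha]$) with a \emph{probabilistic counting argument} that feeds in $(\textbf{P1})$ and $(\textbf{P2})$. The one structural novelty is that the exponential-decay machinery of \cite{FS83,vDK89} is replaced by a direct estimate of the left inverse in the Sobolev norm \eqref{us}, in the spirit of Kriecherbauer and Berti--Bolle \cite{Kri98,BB13}.

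\emph{Step 1: a deterministic Coupling Lemma.} I would prove that there is $\underline{l}_0$ as in the statement such that, for $l\ge\underline{l}_0$ and $L=[l^\alpha]$, if $E\in I$ is such that (a) $\mathrm{dist}(E,\sigma(H_{\Lambda_L(n)}))>L^{-\tau}$, equivalently $\|G_{\Lambda_L(n)}(E)\|\le L^{\tau}$; and (b) $\Lambda_L(n)$ does not contain $J$ pairwise disjoint $(E,\delta)$-\textbf{bad} $l$-cubes; then $\Lambda_L(n)$ is $(E,\tfrac{1+\xi}{\alpha})$-\textbf{good}. The implication ``(a)$\wedge$(b)$\Rightarrow$\textbf{good}'' is where all the analysis lives: by a Vitali-type covering argument, (b) confines the $(E,\delta)$-\textbf{bad} $l$-cubes to a union of at most $J-1$ cubes of radius $\sim 3l$, which is $\ll L$ since $\alpha>1$; tiling the remainder of $\Lambda_L(n)$ by $(E,\delta)$-\textbf{good} $l$-cubes, whose Green's functions have polynomially decaying entries by Remark \ref{rgdec}, one assembles an approximate left inverse $W$ of $H_{\Lambda_L(n)}-E$ with $\|I-W(H_{\Lambda_L(n)}-E)\|_{s_0}$ small and $\|W\|_s$ controlled, for every $s\in[s_0,r_1]$, by $L^{\tau'}$ together with the $l$-scale data and the number of \textbf{bad} cubes; the a priori $\ell^2$ bound (a) then upgrades $W$ to the genuine inverse and gives $\|G_{\Lambda_L(n)}(E)\|_s\le L^{\tau'+\frac{1+\xi}{\alpha}s}$. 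The product estimate hard-wired into \eqref{us} is what lets the polynomial tails compose without loss, the three inequalities in \eqref{para} are exactly the bookkeeping that makes the exponents balance after $l$ is replaced by $L^{1/\alpha}$, and $1+\xi<\alpha$ guarantees $\tfrac{1+\xi}{\alpha}\in(0,1)$, so the output is again a legitimate \textbf{good}-cube exponent.

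\emph{Step 2: the probabilistic estimate.} Fix $|m-n|>2L$, so $\Lambda_L(m)\cap\Lambda_L(n)=\emptyset$. By the contrapositive of Step 1, if some $E\in I$ makes both $\Lambda_L(m)$ and $\Lambda_L(n)$ $(E,\tfrac{1+\xi}{\alpha})$-\textbf{bad}, then at that $E$ at least one of the following holds: (i) $\mathrm{dist}(E,\sigma(H_{\Lambda_L(m)}))\le L^{-\tau}$ and $\mathrm{dist}(E,\sigma(H_{\Lambda_L(n)}))\le L^{-\tau}$; (ii) $\Lambda_L(m)$ contains $J$ pairwise disjoint $(E,\delta)$-\textbf{bad} $l$-cubes; (iii) the same with $n$. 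The union over $E\in I$ of (i) is contained in $\{\mathrm{dist}(\sigma(H_{\Lambda_L(m)}),\sigma(H_{\Lambda_L(n)}))\le 2L^{-\tau}\}$, of probability $\le L^{-2p}/2$ by $(\textbf{P2})$. For (ii): union over the at most $|\Lambda_L|^{J}\le C(d)L^{dJ}$ pairwise disjoint ordered $J$-tuples of $l$-cubes meeting $\Lambda_L(m)$; for each such tuple, group the $J$ cubes into $J/2$ disjoint pairs (here $J\in 2\N$ is used), note that the events ``this pair is simultaneously $(E,\delta)$-\textbf{bad} for some $E\in I$'' depend on disjoint blocks of the i.i.d.\ field, hence are independent, and each is $\le l^{-2p}$ by $(\textbf{P1})$; so each tuple contributes $\le (l^{-2p})^{J/2}=l^{-pJ}$, and (ii) has probability $\le C(d)L^{dJ}l^{-pJ}$. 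Since $L=[l^\alpha]\le l^\alpha$ forces $l^{-pJ}\le L^{-pJ/\alpha}$, this is $\le C(d)L^{-J(\alpha^{-1}p-d)}$, and likewise for (iii); here $p>\alpha d$ makes $\alpha^{-1}p-d>0$. Summing the three contributions and relabeling the constant gives $C(d)L^{-J(\alpha^{-1}p-d)}+L^{-2p}/2$, as claimed.

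\emph{Main obstacle.} Step 2 is essentially the standard vDK bookkeeping; the only new point is splitting the $J$ \textbf{bad} $l$-cubes into $J/2$ \emph{independent} pairs so that $(\textbf{P1})$ is applied $J/2$ times rather than once, which is what produces the exponent $J(\alpha^{-1}p-d)$. The real difficulty is the Coupling Lemma of Step 1: because the hopping decays only polynomially, the geometric resolvent identity cannot be iterated, so one must control the \emph{Sobolev norm} of the left inverse directly, tracking precisely how polynomial off-diagonal tails compose under \eqref{us} and checking that every exponent fits the constraints \eqref{para} after the scale change by the factor $\alpha$. I expect essentially all the technical weight, and all the delicacy in the choice of $s_0,r_1,\alpha,\tau,\tau',\xi,\delta$, to sit there.
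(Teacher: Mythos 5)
Your overall plan is the paper's: a deterministic Coupling Lemma to propagate Sobolev-norm Green's function bounds from scale $l$ to $L=[l^\alpha]$, combined with von Dreifus--Klein probabilistic counting, and your Step 2 is essentially the paper's argument verbatim (pair the $J$ pairwise-disjoint bad $l$-cubes into $J/2$ disjoint pairs, apply (P1) and independence to each pair, union over the $\le C(d)L^{Jd}$ tuples, and use (P2) for the double spectral-proximity event, yielding $\mathbb{P}(\mathbf{D})\le \mathbb{P}(\bigcup_E(\mathbf{B}_1\cap\mathbf{B}_2))+3\mathbb{P}(\bigcup_E\mathbf{C}_1)$).

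Two points in your Step 1 sketch, though, do not describe how the paper's Coupling Lemma actually runs and would need to be corrected. First, the Vitali covering giving $\le J-1$ clusters of radius $\sim 3l$ is only a preliminary step: the Coupling Lemma requires the bad set decomposed into clusters $\Omega_j$ of diameter $O(Jl^{1+\xi})$ that are \emph{mutually separated by at least $l^{1+\xi}$}, which the paper achieves via a further equivalence-relation merge of the $3l$-cubes. The separation being $l^{1+\xi}$ with $\xi>0$ (not merely $O(l)$) is essential: the Smoothing property \eqref{sp1} turns this gap into a factor $(l^{1+\xi})^{-r_1+O(s_0)}$ in $\|\mathcal{A}'-\mathcal{D}\|_{s_0}$, and the resulting $-\xi r_1$ is exactly what must dominate the $+\alpha\tau$ coming from the a priori $\ell^2$ bound --- this is the content of the second inequality in \eqref{para}. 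Second, the a priori bound $\|G_{\Lambda_L(n)}(E)\|\le L^\tau$ is not used to ``upgrade a globally defined approximate left inverse $W$ with small residual $\|I-W\mathcal{A}\|_{s_0}$ to the true inverse''; no such $W$ with uniformly small residual can be built from local data, since without the $\ell^2$ bound there is no control over the inverse near the bad clusters. What the paper does instead is a Schur-type elimination of the good sites (writing $u_G=\mathcal{N}u_B+\mathcal{M}h$), reducing to $\mathcal{A}'u_B=\mathcal{Z}h$ with unknowns only on the bad set; the $\ell^2$ bound then enters a perturbation argument showing that the block-diagonal piece $\mathcal{D}$ of $\mathcal{A}'$ over the separated bad clusters admits a left inverse of $\ell^2$ norm $\le 2L^\tau$, which --- being supported in bands of width $O(Jl^{1+\xi})$ --- has controlled Sobolev norms by Smoothing \eqref{sp2}; finally $\mathcal{A}'$ is treated as a small perturbation of $\mathcal{D}$. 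With these two corrections your outline aligns with the proof.
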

\begin{rem}
In this theorem  no regularity assumption on $\mu$ is needed.
Moreover, if we assume further in this theorem $(1+\xi)/\alpha\leq \delta$ and $p>\alpha d+2\alpha p/J$, then the ``propagation of smallness'' for the probability occurs (see Theorem \ref{kthm} in the following for details).
\end{rem}

\subsection{Power-law Localization}
A sufficient condition for the validity of (\textbf{P1}) and (\textbf{P2}) in Theorem \ref{msa} can be derived from some regularity assumption on $\mu$.

Let us recall the H\"older continuity of a distribution defined in \cite{CKM87}.

\begin{defn}[\cite{CKM87}]
We will say a probability measure $\mu$ is H\"older continuous of order $\rho>0$ if
\begin{align}
\frac{1}{\mathcal{K}_\rho(\mu)}=\inf_{\kappa>0}\sup_{0<|a-b|\leq \kappa}|a-b|^{-\rho}\mu([a,b])<\infty.
\end{align}
In this case will call $\mathcal{K}_\rho(\mu)>0$ the order of $\mu$.
\end{defn}
\begin{rem}
\begin{itemize}
\item Let $\mu$ be H\"older continuous of order $\rho$ (i.e., $\mathcal{K}_\rho(\mu)>0$).  Then for any $0<\kappa<\mathcal{K}_\rho(\mu)$, there is some
$\kappa_0=\kappa_0(\kappa,\mu)>0$ so that
\begin{align}\label{mu}
\mu([a,b])\leq \kappa^{-1}|a-b|^\rho\ \mathrm{for}\ 0\leq b-a\leq \kappa_0.
\end{align}

\item If $\mu$ is \textit{absolutely continuous} with a density in $L^q$ with $1<q\leq\infty$, then $\mu$ is H\"older continuous of order $1-1/q$, and $\mathcal{K}_{1-1/q}(\mu)\geq \|\frac{{d}\mu}{{d}x}\|_{L^q}^{-1}$, here ${d}x$ means the Lesbesgue measure on $\R.$
\item There are many \textit{singular continuous} $\mu$ which are H\"older continuous of some order $\rho>0$ \cite{BH80}.
\end{itemize}

\end{rem}

Now we can state  the second main result on the \textit{power-law} localization.
\begin{thm}\label{mthm}
Let ${H}_{\omega}$ be defined by $(\ref{qps})$ with the common distribution $\mu$ being H\"older continuous of order $\rho>0$, i.e., $\mathcal{K}_\rho(\mu)>0$. Let $r\geq\max\{\frac{100d+23\rho d}{\rho}, 331d\}$. Fix any $0<\kappa<\mathcal{K}_\rho(\mu)$. Then there exists $\lambda_0=\lambda_0(\kappa,\mu,\rho,{M},r,d)>0$ such that for  $\lambda\geq\lambda_0$, $H_\omega$
has pure point spectrum  for $\mathbb{P}$ almost all  $\omega\in\Omega$. Moreover, for $\mathbb{P}$ almost all  $\omega\in\Omega$, there exists a complete system of eigenfunctions $\psi_\omega=\{\psi_\omega(n)\}_{n\in{\Z}^d}$  satisfying $|\psi_\omega(n)|\leq  |n|^{-r/600}$ for  $|n|\gg1$.
\end{thm}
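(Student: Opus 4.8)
The plan is to deduce the power-law localization from the Green's function estimate (Theorem \ref{msa}) via a bootstrapped multi-scale analysis followed by an application of Shnol's theorem. The argument has three phases: (i) verify the initial (length-scale zero) hypotheses and the hypotheses (\textbf{P1}), (\textbf{P2}) of Theorem \ref{msa} from the H\"older continuity of $\mu$; (ii) iterate Theorem \ref{msa} along the sequence of scales $L_{k+1}=[L_k^\alpha]$ to obtain, for $\mathbb{P}$-a.e.\ $\omega$ and every energy $E$, a decay rate on Green's functions of cubes centered at lattice points far from each other; (iii) convert this into eigenfunction decay. For phase (i), the key point is the Wegner-type estimate: for a single cube $\Lambda_L(m)$ of side $L$, the probability that $\mathrm{dist}(E,\sigma(H_{\Lambda_L(m)}))\le\eta$ is bounded, using that each diagonal entry $V_\omega(n)$ has a distribution with $\mu([a,b])\le\kappa^{-1}|b-a|^\rho$; summing over the $|\Lambda_L(m)|=(2L+1)^d$ sites and using a rank-one / spectral-averaging argument gives $\mathbb{P}(\mathrm{dist}(\sigma(H_{\Lambda_L(m)}),\sigma(H_{\Lambda_L(n)}))\le 2L^{-\tau})\lesssim L^d\cdot L^{-\tau\rho}$, which is $\le L^{-2p}/2$ once $\tau\rho$ is large enough relative to $p$ and $d$; this fixes the lower bound on $r$ in the statement (the relation $r\ge\max\{(100d+23\rho d)/\rho,\,331d\}$ comes from chasing the inequalities \eqref{para} together with the requirement $p>\alpha d$ and the decay exponent bookkeeping). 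The initial-scale estimate (\textbf{P1}) at $l=\underline l_0$ follows from a crude perturbative bound: at large disorder $\lambda$, $\lambda^{-1}\mathcal T$ is a small perturbation, so $H_{\Lambda_l(m)}$ is close to the diagonal operator $V_\omega$, and with high probability (again by the H\"older bound) no eigenvalue of $H_{\Lambda_l(m)}$ is within a small window of $E$; choosing $\lambda$ large forces $\|G_{\Lambda_l(m)}(E)\|_s\le l^{\tau'+\delta s}$ to hold off an event of probability $\le l^{-2p}$.

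For phase (ii), I would run the standard von Dreifus--Klein induction using Theorem \ref{msa} as the inductive step: having the pair estimate at scale $L_k$, the hypotheses (\textbf{P1}) at scale $l=L_k$ and (\textbf{P2}) at scale $L=L_{k+1}=[L_k^\alpha]$ are re-verified (the Wegner estimate gives (\textbf{P2}) at every scale), and the conclusion of Theorem \ref{msa} upgrades this to the pair estimate at scale $L_{k+1}$, with the ``goodness'' parameter improving from $\delta$ to $(1+\xi)/\alpha$; arranging $(1+\xi)/\alpha\le\delta$ (as in the Remark after Theorem \ref{msa}) makes the scheme self-consistent, and $p>\alpha d+2\alpha p/J$ (achievable by taking $J$ large, hence $r$ large) makes the error probabilities $C(d)L_k^{-J(\alpha^{-1}p-d)}+L_k^{-2p}/2$ summable in $k$. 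Borel--Cantelli then yields: for $\mathbb{P}$-a.e.\ $\omega$, for all but finitely many $k$ and all $E$, at least one of $\Lambda_{L_k}(m),\Lambda_{L_k}(n)$ is $(E,\delta)$-good whenever $|m-n|>2L_k$ — in particular, for each fixed $m$ (say $m$ near a putative eigenvalue's ``center of localization'') and all large $k$, the cube $\Lambda_{L_k}(n)$ is $(E,\delta)$-good for $n$ on an annulus of radius $\sim L_k$ around $m$. By Remark \ref{rgdec}, goodness of $\Lambda_{L_k}(n)$ gives the off-diagonal bound $|G_{\Lambda_{L_k}(n)}(E)(n',n'')|\le|n'-n''|^{-(1-\zeta)r_1}$, i.e.\ polynomial decay of the local resolvent.

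For phase (iii), I invoke Shnol's theorem: for $\mathbb{P}$-a.e.\ $\omega$, spectrally almost every $E$ admits a polynomially bounded generalized eigenfunction $\psi_\omega$ (i.e.\ $|\psi_\omega(n)|\le C\langle n\rangle^{A}$ for some $A$). Then I use the geometric resolvent (Poisson-type) identity: writing $\psi_\omega$ on a cube $\Lambda_{L_k}(n)$ via $G_{\Lambda_{L_k}(n)}(E)$ applied to the boundary/off-diagonal hopping terms, and iterating this along a chain of good cubes marching outward from the localization center $m$ to the site $n$ with $|n-m|\sim N$, each step contributes a polynomial gain from the good-cube off-diagonal decay $|n'-n''|^{-(1-\zeta)r_1}$ (and the long-range hopping tails $|k-k'|^{-r}$ are summable since $r>d$). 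After $\sim\log N/\log L_k$ steps the polynomial prefactor from Shnol is beaten and one gets $|\psi_\omega(n)|\le|n|^{-r/600}$ for $|n|\gg1$; since every spectrally-a.e.\ generalized eigenfunction is then genuinely $\ell^2$ (indeed decays polynomially), the spectrum is pure point with a complete system of such eigenfunctions. The main obstacle is phase (iii): unlike the exponential case, the polynomial off-diagonal decay of good-cube Green's functions is only barely summable, so one must be careful that iterating the resolvent identity through a chain of good cubes does not lose the decay — this is exactly where the Sobolev-norm formalism of Berti--Bolle and the quantitative inequalities \eqref{para} are needed to keep the exponents under control, and where the somewhat lossy final rate $r/600$ originates (the constant reflects the cumulative loss $(1-\zeta)r_1$ versus $r$ through the chain, the annulus combinatorics, and the scale ratio $\alpha$). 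A secondary technical point is ensuring the Wegner estimate at all scales is compatible with the required $p>\alpha d$ while $r$ stays finite, which is precisely the role of the explicit threshold $r\ge\max\{(100d+23\rho d)/\rho,\,331d\}$.
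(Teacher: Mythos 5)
Your phases (i) and (ii) — verification of (\textbf{P1}) via large disorder plus the H\"older bound, verification of (\textbf{P2}) via a Carmona--Klein--Martinelli-style Wegner estimate (the paper uses exactly this, noting that the hopping $\lambda^{-1}\mathcal T$ is non-random), and the von Dreifus--Klein induction via Theorem~\ref{msa} made self-consistent by $(1+\xi)/\alpha\le\delta$ and $p>\alpha d+2\alpha p/J$, followed by Borel--Cantelli — are essentially what the paper does, and you have the parameter bookkeeping pointed in the right direction.

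Phase (iii) is where the proposal has a genuine gap, and you in fact put your finger on it yourself without resolving it. You propose to ``iterate along a chain of good cubes marching outward from the localization center'' and remark that the difficulty is that the polynomial off-diagonal decay is ``only barely summable''. This is worse than a difficulty: in the polynomial setting a chain argument based on repeatedly applying the geometric resolvent/Poisson identity does not work. The reason is that after a single application of~\eqref{pi} on $\Lambda_{L_k}(n)$, the sum over $n''\notin\Lambda_{L_k}(n)$ runs over \emph{all} of $\mathbb{Z}^d\setminus\Lambda_{L_k}(n)$, not just the annular boundary; the hopping weight $|n'-n''|^{-r}$ lets distant sites contribute, so long jumps bypass the chain of cubes and one cannot accumulate decay over $\sim N/L_k$ steps the way one does in the exponential case. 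The paper explicitly abandons the iterated geometric-resolvent scheme (this is stated in the introduction) and replaces it with a \emph{single} application of the Poisson identity, at a scale matched to the distance: one shows, using $\psi(0)=1$, that $\Lambda_{L_k}(0)$ can be $(E,1/2)$-good for only finitely many $k$ (you omit this normalization step, which is needed so that the Borel--Cantelli event forces all the annular cubes $\Lambda_{L_k}(n)$, $n\in A_{k+1}$, to be good); then for $n$ in the sub-annulus $\widetilde A_{k+1}=\Lambda_{L_{k+1}/10}\setminus\Lambda_{10L_k}$ one applies \eqref{pi} once on the good cube $\Lambda_{L_k}(n)$, splits the sum into $|n'-n|\le L_k/2$ (controlled by the Sobolev-norm bound on $G_{\Lambda_{L_k}(n)}$) and $L_k/2\le|n'-n|\le L_k$ (controlled by the pointwise decay~\eqref{gdec}), and uses that $L_k\gtrsim|n|^{1/\alpha}$ with $\alpha=6$ so that the resulting power of $L_k$ already gives the claimed power of $|n|$. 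No iteration is performed; the Sobolev formalism and the inequalities~\eqref{para} enter only inside the MSA (the Coupling Lemma), not in the eigenfunction-decay step. As written, your phase~(iii) would fail for exactly the reason you flag, and the paper's single-scale argument is the mechanism that avoids it.
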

\begin{rem}
One may replace $\max\{\frac{100d+23\rho d}{\rho}, 331d\}$ with a smaller one. Actually, if $\mu$ is \textit{absolutely continuous},  it has been proven by Aizenman-Molchanov \cite{AM93} that the \textit{power-law} localization holds for $r>d$ by using the FMM. 
\end{rem}






\section{Proof of Theorem \ref{msa}}


\begin{proof}[\bf Proof of Theorem \ref{msa}]
The proof consists of a deterministic and a probabilistic part.

We begin with the following definition.

\begin{defn}[]
We call a site $n\in\Lambda\subset{\Z}^d$ is $(l,E,\delta)$-\textbf{good} with respect to (\textit{w.r.t}) $\Lambda$ if there exists some $\Lambda_{l}(m)\subset\Lambda$ such that $\Lambda_{l}(m)$ is $(E,\delta)$-\textbf{good} and $n\in \Lambda_{l}(m)$ with ${\rm dist}(n,\Lambda\setminus\Lambda_{l}(m))\geq l/2$. Otherwise, we call $n\in\Lambda\subset{\Z}^d$ is $(l,E,\delta)$-\textbf{bad} \textit{w.r.t} $\Lambda$.
\end{defn}

We then prove a key \textbf{Coupling Lemma}. Recall that $E\in I$ with $|I|\leq 1$ and $I\cap \sigma(H_\omega)\neq\emptyset.$
\begin{lem}[\textbf{Coupling Lemma}]\label{klem}
Let $L=[l^\alpha]$.  Assume that
\begin{itemize}
\item We have \eqref{para} holds true and $1+\xi<\alpha$.
\item We can decompose $\Lambda_L(n)$ into two disjoint subsets $\Lambda_L(n)=B\cup G$ with the following properties: We have
\begin{align*}
B=\bigcup_{1\leq j<\infty}\Omega_{j},
\end{align*}
where for each $j$, ${\rm diam}(\Omega_j)\leq C_\star l^{1+\xi}$ ($C_\star>1$), and for $j\neq j'$, ${\rm dist}(\Omega_j,\Omega_{j'})\geq l^{1+\xi}$. For each $k\in G$, $k$ is $(l,E,\delta)$-{\bf good} w.r.t $\Lambda_{L}(n)$.

\item 
 $$\|G_{\Lambda_L(n)}(E)\|\leq L^{\tau}.$$
\end{itemize}
Then for $$l\geq \underline{l}_0(\|\mathcal{T}\|_{r_1},M,C_\star,\alpha,\tau,\xi,\tau',\delta, r_1,s_0,d)>0,$$ we have $\Lambda_L(n)$ is $(E,\frac{1+\xi}{\alpha})$-{\bf good}.
\end{lem}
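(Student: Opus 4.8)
The plan is to construct the inverse of $H_{\Lambda_L(n)} - E$ directly, and then bound its Sobolev norm $\|\cdot\|_s$ for all $s \in [s_0, r_1]$, rather than iterating a geometric resolvent identity. Write $A = H_{\Lambda_L(n)} - E$. Because $\|G_{\Lambda_L(n)}(E)\| \leq L^\tau$ by the third hypothesis, the inverse already exists as an $\ell^2$ operator; the whole point is to upgrade this to a polynomially-weighted (Sobolev) bound using the local information on small scales. Following the Kriecherbauer--Berti--Bolle philosophy, I would fix an approximate inverse $T$ built block-by-block: for each $k \in G$, use the good $l$-cube $\Lambda_l(m) \ni k$ with ${\rm dist}(k, \Lambda_L(n)\setminus\Lambda_l(m)) \geq l/2$ furnished by the definition of $(l,E,\delta)$-good, and set the $k$-th column of $T$ (away from $k$) to the corresponding column of $G_{\Lambda_l(m)}(E)$, exploiting the decay estimate \eqref{gdec} from Remark \ref{rgdec}, namely $|G_{\Lambda_l(m)}(E)(k',k)| \leq |k'-k|^{-(1-\zeta)r_1}$ for $|k'-k| \geq l/2$. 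For the bad part $B = \bigcup_j \Omega_j$, since each $\Omega_j$ has diameter $\leq C_\star l^{1+\xi}$ and the $\Omega_j$ are $l^{1+\xi}$-separated, one treats each $\Omega_j$ as a single (not necessarily invertible) block and absorbs it into the error term, using only the crude a priori bound $\|G_{\Lambda_L(n)}(E)\| \leq L^\tau$ on that block.

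The key computation is then to estimate $\|A T - I\|_s$ (or $\|TA - I\|_s$, since we want a left inverse). The error $AT - I$ has entries supported on the "boundary layers" where the local resolvent identity for $\Lambda_l(m)$ fails to reproduce $A$ exactly — these come from the off-block hopping terms of $\lambda^{-1}\mathcal{T}$ crossing between $\Lambda_l(m)$ and its complement, which cost $\langle \text{distance} \rangle^{-(r - d/2)}$ in Sobolev norm, combined with the $l^{\tau' + \delta s}$ Sobolev bound on the good cubes. The three inequalities in \eqref{para} are precisely the bookkeeping conditions ensuring that, after summing over $s \in [s_0, r_1]$ and over the $\leq C L^d$ blocks, one gets $\|AT - I\|_{s_0} \leq 1/2$ (so a Neumann series converges) while simultaneously $\|AT - I\|_{s} \cdot \|T\|_{s_0}$ stays subordinate — this is where the algebra property $\|u_1 u_2\|_s \leq \tfrac12\|u_1\|_{s_0}\|u_2\|_s + C(s)\|u_1\|_s\|u_2\|_{s_0}$ of Definition \ref{snorm} does the heavy lifting, letting the low norm ($s_0$) of the small error control the interpolation. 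The first inequality $-(1-\delta)r_1 + \tau' + 2s_0 < 0$ handles the decay from a single good-cube boundary; the second, involving $\xi$ and the a priori $\tau$-bound, controls the contribution of the bad clusters $\Omega_j$ of size $l^{1+\xi}$; and the third, with the factor $\alpha^{-1}$, is what converts an $l$-scale bound into an $L = [l^\alpha]$-scale bound of the required form $L^{\tau' + \frac{1+\xi}{\alpha} s}$.

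Concretely the steps are: (i) fix $\zeta \in (\delta,1)$ with $\tau' - (\zeta-\delta)r_1 < 0$ so Remark \ref{rgdec} applies, and record the pointwise decay of the good local Green's functions; (ii) define the approximate inverse $T$ as above and compute the entries of $E_{\rm err} := A T - I$, isolating the contributions of good-cube boundaries and of the bad clusters; (iii) estimate $\|E_{\rm err}\|_s$ for $s \in [s_0, r_1]$ by summing geometric-type series in the separation distances, invoking $\|\mathcal{T}\|_{r_1} < \infty$; (iv) choose $l$ large (depending on the listed parameters) so that $\|E_{\rm err}\|_{s_0} \leq 1/2$, invert via $T(I + E_{\rm err})^{-1} = A^{-1}$ using the algebra property to propagate the $\ell^2$ bound $L^\tau$ and the low-norm smallness into a bound on $\|A^{-1}\|_s = \|G_{\Lambda_L(n)}(E)\|_s$ for all $s \in [s_0,r_1]$; (v) check via the third inequality of \eqref{para} that the resulting exponent is $\leq \tau' + \frac{1+\xi}{\alpha}s$, i.e. $\Lambda_L(n)$ is $(E, \frac{1+\xi}{\alpha})$-good. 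The main obstacle, as in \cite{BB13,Kri98}, is step (iv): one must be careful that the number of blocks ($\sim L^d = l^{\alpha d}$) and the polynomial weights do not overwhelm the decay gained from a single boundary layer, and that the interpolation loses only a controlled amount — this is exactly why one needs the somewhat delicate three-inequality system \eqref{para} rather than a single clean condition, and why the left inverse (not two-sided inverse) is estimated first before using self-adjointness to conclude.
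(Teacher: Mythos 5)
Your overall philosophy is right — build an approximate inverse from local Green's functions and propagate Sobolev bounds via a perturbation argument rather than iterating a geometric resolvent identity, which is indeed the Kriecherbauer/Berti--Bolle strategy the paper uses. However, your treatment of the bad clusters has a genuine gap. You propose to ``treat each $\Omega_j$ as a single block and absorb it into the error term, using only the crude a priori bound $\|G_{\Lambda_L(n)}(E)\| \leq L^\tau$.'' As stated, this cannot work in a single approximate-inverse-plus-Neumann-series step. If $T$ is set to zero on the bad columns, then $AT-I$ carries $-1$ on the diagonal at every bad site, so $\|AT-I\|_{s_0}$ is not small. If instead you take the bad columns of $T$ from $A^{-1}$ itself, the error there vanishes but you have no Sobolev control on those columns — the a priori $\ell^2$ bound $L^\tau$ says nothing about $\|\cdot\|_s$, and the $\Omega_j$-block of $A$ need not even be invertible (you acknowledge this), so you cannot invert it locally.

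The paper resolves this by a two-stage reduction that your plan skips. First, the good unknowns $u_G$ are eliminated, yielding a reduced equation $\mathcal{A}'u_B=\mathcal{Z}h$ purely on the bad sites, where $\mathcal{A}'=\mathcal{A}_X^B+\mathcal{A}_X^G\mathcal{N}$ and $\mathcal{N}$ comes from inverting $\mathcal{I}+\Gamma_G^G$ using the Perturbation argument; crucially, $\mathcal{A}'$ decays polynomially in the off-diagonal. Second, $\mathcal{A}'$ is split as $\mathcal{D}+\mathcal{R}$, where $\mathcal{D}$ is supported on $\bigcup_j(\Omega_j\times\widetilde\Omega_j)$ with $\widetilde\Omega_j$ an $l^{1+\xi}/4$-neighborhood of $\Omega_j$, and the tail $\mathcal{R}$ is small in $\|\cdot\|_{s_0}$ precisely because the $\Omega_j$ are $l^{1+\xi}$-separated. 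One then exploits that $(\mathcal{A}^{-1})_B^X$ is an $\ell^2$-bounded left inverse of $\mathcal{A}'$; perturbing gives a left inverse $\mathcal{W}$ of $\mathcal{D}$ with $\|\mathcal{W}\|\leq 2L^\tau$. The key structural fact your plan is missing is that the truncation $\mathcal{W}_0$ of $\mathcal{W}$ to the block supports $\bigcup_j(\Omega_j\times\widetilde\Omega_j)$ is \emph{still} a left inverse of $\mathcal{D}$; and since $\mathcal{W}_0$ is supported where $|k-k'|\leq 2C_\star l^{1+\xi}$, the Smoothing property converts its $\ell^2$ bound into the Sobolev bound $\|\mathcal{W}_0\|_s\lesssim l^{(1+\xi)(s+s_0)+\alpha\tau}$. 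A second perturbation then yields a Sobolev-bounded left inverse of $\mathcal{A}'$. Without this truncate-and-reperturb step on the reduced operator, the $\ell^2$ a priori bound never becomes a Sobolev bound, and the Neumann series you invoke in step (iv) does not close. Your reading of the role of the three inequalities in \eqref{para} is otherwise correct in spirit.
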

\begin{rem}\label{clrem}
The main scheme of the proof is definitely from Berti-Bolle \cite{BB13} in dealing with nonlinear PDEs. 
Since we are also interested in improving the lower bound on $r$, we have to figure out the dependence relations (i.e., \eqref{para}) among various parameters in the iterations. Then the Coupling Lemma (i.e., Proposition 4.1 in \cite{BB13}) of Berti-Bolle may not be used directly here and it needs some small modifications on the proof of Berti-Bolle \cite{BB13}.  It is a key feature that in random operators case the  number of disjoint \textbf{bad} cubes of smaller size contained in a larger cube is fixed and independent of the iteration scales. This  permits us to get \textit{separation} distance of  $l^{1+\xi}$ ($\xi>0$) without increasing the diameter order (of order also $l^{1+\xi}$) of \textbf{bad} cubes clusters (see Lemma \ref{jlem} in the following for details).  As a result, it provides a possible way for improving the lower bound on $r$. 
\end{rem}

\begin{proof}[{\bf Proof of Lemma \ref{klem}}]
The Sobolev norms introduced in \cite{BB13} are convenient to the proof. Below, we collect some useful properties of Sobolev norms for matrices  (see \cite{BB13} for details):
\begin{itemize}
\item ({\bf Interpolation property}): Let $B,C,D$ be finite subsets of ${\Z}^d$ and let $\mathcal{M}_1\in \mathbf{M}^{C}_D,\mathcal{M}_2\in \mathbf{M}_C^B$. Then for any $s\geq s_0$,
\begin{align}\label{ip1}
\|\mathcal{M}_1\mathcal{M}_2\|_s\leq (1/2)\|\mathcal{M}_1\|_{s_0}\|\mathcal{M}_2\|_s+(C(s)/2)\|\mathcal{M}_1\|_s\|\mathcal{M}_2\|_{s_0},
\end{align}
and
\begin{align}
\label{ip2}\|\mathcal{M}_1\mathcal{M}_2\|_{s_0}&\leq \|\mathcal{M}_1\|_{s_0}\|\mathcal{M}_2\|_{s_0},\\
\label{ip3}\|\mathcal{M}_1\mathcal{M}_2\|_s&\leq C(s)\|\mathcal{M}_1\|_s\|\mathcal{M}_2\|_s,
\end{align}
where $C(s)\geq 1$ and $C(s_0)=1$. In particular, if $\mathcal{M}\in \mathbf{M}_B^B$ and $n\geq 1$, then
\begin{align}
\label{ip4}\|\mathcal{M}^n\|_{s_0}&\leq \|\mathcal{M}\|_{s_0}^n, \|\mathcal{M}\|\leq \|\mathcal{M}\|_{s_0},\\
\label{ip5}\|\mathcal{M}^n\|_s&\leq C(s)\|\mathcal{M}\|_{s_0}^{n-1}\|\mathcal{M}\|_s.
\end{align}

\item ({\bf Smoothing property}): Let $\mathcal{M}\in \mathbf{M}^B_C$. Then for $s\geq s'\geq 0$,
\begin{align}
\label{sp1}\mathcal{M}(k,k')=0\  {\rm for}\  |k-k'|<N\Rightarrow \|\mathcal{M}\|_{s'}\leq N^{-(s-s')}\|\mathcal{M}\|_{s},
\end{align}
and for $N\ge N_0(s_0,d)>0$,
\begin{align}\label{sp2}
\mathcal{M}(k,k')=0\  {\rm for}\  |k-k'|>N\Rightarrow \left\{\begin{aligned}
&\|\mathcal{M}\|_{s}\leq N^{s-s'}\|\mathcal{M}\|_{s'},\\
&\|\mathcal{M}\|_{s}\leq N^{s+s_0}\|\mathcal{M}\|.
\end{aligned}\right.
\end{align}
\item ({\bf Columns estimate}): Let $\mathcal{\mathcal{M}}\in \mathbf{M}^B_C$. Then for $s\geq0$,
\begin{align}\label{cl1}
\|\mathcal{M}\|_s\leq C(s_0,d)\max_{k\in C}\|\mathcal{M}_{\{k\}}\|_{s+s_0},
\end{align}
where $\mathcal{M}_{\{k\}}:=(\mathcal{M}(k_1,k))_{k_1\in B}\in \mathbf{M}^B_{\{k\}}$ is a column sub-matrix of $\mathcal{M}$.

\item ({\bf Perturbation argument}): If $\mathcal{M}\in \mathbf{M}^B_C$ has a left inverse  $\mathcal{N}\in \mathbf{M}^C_B$ (i.e., $\mathcal{N}\mathcal{M}=\mathcal{I}$, where $\mathcal{I}$ the identity matrix), then for all $\mathcal{P}\in \mathbf{M}_C^B$ with
$\|\mathcal{P}\|_{s_0}\|\mathcal{N}\|_{s_0}\leq 1/2,$
the matrix $\mathcal{M}+\mathcal{P}$ has a left inverse $\mathcal{N}_\mathcal{P}$ that satisfies
\begin{align}
\label{pl2}\|\mathcal{N}_\mathcal{P}\|_{s_0}&\leq 2\|\mathcal{N}\|_{s_0},\\
\label{pl3}\|\mathcal{N}_\mathcal{P}\|_s&\leq C(s)(\|\mathcal{N}\|_s+\|\mathcal{N}\|_{s_0}^2\|\mathcal{P}\|_s)\ {\rm for\ } s\geq {s_0}.
\end{align}
Moreover, if
$\|\mathcal{P}\|\cdot\|\mathcal{N}\|\leq 1/2,$
then
\begin{align}\label{pl5}
\|\mathcal{N}_\mathcal{P}\|\leq 2\|\mathcal{N}\|.
\end{align}
\end{itemize}

We then turn to the proof of the \textbf{Coupling Lemma}.

Write $\mathcal{A}={H}_{X}-E$ with $X=\Lambda_L(n)$. Let $\mathcal{T}_X=R_X\mathcal{T}R_X$. For $u\in \mathbb{C}^X$ with $X=B\cup G$,  define $u_{G}=R_Gu\in \mathbb{C}^G$ and $u_B=R_Bu\in \mathbb{C}^B$.  Let $h$ be  an arbitrary fixed vector in $\ell^2(X)$ and consider the equation
\begin{align}\label{auh}
\mathcal{A}u=h.
\end{align}

Following \cite{BB13}, we have three steps:

{\bf Step 1: Reduction on \textbf{good} sites}
\begin{lem}[]
Let $l\geq l_0(\tau', \delta, {r_1} , s_0, d)>0$. Then there exist $\mathcal{M}\in{\bf M}_G^X$ and $\mathcal{N}\in{\bf M}_G^B$ satisfying the following:
\begin{align}\label{mnd}
\|\mathcal{M}\|_{s_0}\leq C(s_0,d)l^{\tau'+(1+\delta)s_0},\  \|\mathcal{N}\|_{s_0}\leq C({r_1},s_0,d)\|\mathcal{T}_X\|_{r_1}l^{-(1-\delta){r_1}+\tau'+2s_0}\leq 1/2,
\end{align}
 and for all $s>{s_0}$:
\begin{align}
\label{ms}&\|\mathcal{M}\|_s\leq C(s,s_0,d) l^{2\tau'+(1+2\delta) s_0}(l^{s}\|\mathcal{T}_X\|_{s_0}+\|\mathcal{T}_X\|_{s+s_0}),\\
\label{ns}&\|\mathcal{N}\|_s\leq  C(s,s_0,d)l^{\tau'+\delta s_0}(l^{s}\|\mathcal{T}_X\|_{s_0}+\|\mathcal{T}_X\|_{s+s_0}),
\end{align}
such that
\begin{align}\label{ugub}
u_G=\mathcal{N}u_B+\mathcal{M}h.
\end{align}
\end{lem}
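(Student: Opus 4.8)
The plan is to isolate the good part of the cube and solve for $u_G$ in terms of $u_B$ and $h$ by gluing together the small-scale Green's functions attached to the good sites. Concretely, for each $k\in G$ choose (by definition of a good site) a good $l$-cube $\Lambda_{l}(m_k)\subset X=\Lambda_L(n)$ containing $k$ with ${\rm dist}(k,X\setminus\Lambda_{l}(m_k))\ge l/2$; abbreviate $W_k=\Lambda_l(m_k)$ and let $G_{W_k}=G_{W_k}(E)=(H_{W_k}-E)^{-1}$, which exists and is Sobolev-bounded by $\|G_{W_k}\|_s\le l^{\tau'+\delta s}$ for $s\in[s_0,r_1]$. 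First I would write, for $k\in G$, the $k$-th component of \eqref{auh} restricted to $W_k$: using $\mathcal{A}=H_X-E=\lambda^{-1}\mathcal{T}_X+(V_\omega-E)$ and the fact that $H_{W_k}$ and $H_X$ agree on $W_k\times W_k$, we get a "geometric resolvent"-type identity expressing $u(k)$ as $(G_{W_k}h)(k)$ minus a correction coming from the hopping $\mathcal{T}$ that connects $W_k$ to $X\setminus W_k$. Because $k$ sits at distance $\ge l/2$ from $\partial W_k$ inside $X$, that correction term only involves $\mathcal{T}(j,j')$ with $|j-j'|\ge l/2$, hence is small (polynomially in $l$). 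Separating in the correction the contribution of sites in $G$ from those in $B$ yields an equation of the schematic form $u_G = \mathcal{R}\,u_G + \mathcal{N}u_B + \mathcal{M}_0 h$, where $\mathcal{R}$, $\mathcal{N}$, $\mathcal{M}_0$ are assembled from the $G_{W_k}$'s and the long-range tail of $\mathcal{T}$.

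The heart of the argument is then to invert $\mathcal{I}-\mathcal{R}$. I would estimate $\|\mathcal{R}\|_{s_0}$ by the columns estimate \eqref{cl1}: each column of $\mathcal{R}$ is, up to the $C(s_0,d)$ constant, $\lambda^{-1}G_{W_k}$ composed with a block of $\mathcal{T}_X$ whose entries vanish for $|j-j'|<l/2$; applying the smoothing property \eqref{sp1} to that block of $\mathcal{T}_X$ gains a factor $l^{-(1-\delta)r_1}$ (this is exactly where $\|\mathcal{T}_X\|_{r_1}$ enters and where the first inequality in \eqref{para}, $-(1-\delta)r_1+\tau'+2s_0<0$, is used), against the $l^{\tau'+\delta s_0}$ from $\|G_{W_k}\|_{s_0}$. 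So $\|\mathcal{R}\|_{s_0}\le C\,\|\mathcal{T}_X\|_{r_1}\,l^{-(1-\delta)r_1+\tau'+2s_0}\le 1/2$ for $l$ large, and the Neumann series $\sum_{j\ge0}\mathcal{R}^j$ converges in $\|\cdot\|_{s_0}$ by \eqref{ip2}/\eqref{ip4}, giving a left inverse with $\|(\mathcal{I}-\mathcal{R})^{-1}\|_{s_0}\le 2$. Setting $\mathcal{N}=(\mathcal{I}-\mathcal{R})^{-1}\mathcal{N}$ (reusing the letter, i.e.\ $\mathcal{N}:=(\mathcal{I}-\mathcal{R})^{-1}$ times the "$\mathcal{N}$-block") and $\mathcal{M}=(\mathcal{I}-\mathcal{R})^{-1}\mathcal{M}_0$ produces \eqref{ugub}. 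The $s_0$-bounds in \eqref{mnd} come out immediately: $\|\mathcal{N}\|_{s_0}\le 2\cdot(\text{block bound})$ with the same smoothing gain, and $\|\mathcal{M}\|_{s_0}\le 2\cdot C(s_0,d)l^{\tau'+\delta s_0}\cdot\|(\text{identity-type block})\|_{s_0}$, which is $\le C(s_0,d)l^{\tau'+(1+\delta)s_0}$ after absorbing one more power of $l$ from the overlap multiplicity (each site of $X$ lies in boundedly many of the $W_k$, with the count controlled by a power of $l$).

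For the higher-norm bounds \eqref{ms}–\eqref{ns} I would feed the $s$-version of the interpolation inequality \eqref{ip1} through the Neumann series: since $\|\mathcal{R}\|_{s_0}\le 1/2$, one has $\|(\mathcal{I}-\mathcal{R})^{-1}\|_s\le C(s)\|\mathcal{R}\|_s + C(s)$ roughly (the standard "tame" estimate for the inverse, cf.\ the perturbation argument \eqref{pl3} applied with $\mathcal{M}=\mathcal{I}$, $\mathcal{P}=-\mathcal{R}$), and $\|\mathcal{R}\|_s$ is bounded via \eqref{cl1} and \eqref{ip3} by $C(s)l^{\tau'+\delta s_0}(l^{s}\|\mathcal{T}_X\|_{s_0}+\|\mathcal{T}_X\|_{s+s_0})$ — the two terms coming from putting the $s$-weight on $G_{W_k}$ versus on the $\mathcal{T}_X$-block, using the columns estimate to localize and then \eqref{sp2} to redistribute derivatives. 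Composing with the $s_0$-bounded factors and tracking constants gives \eqref{ms} and \eqref{ns}. The main obstacle, and the place requiring the most care, is the bookkeeping in Step 1 of exactly which blocks of $\mathcal{T}_X$ carry the smoothing gain and of the overlap multiplicity of the cubes $\{W_k\}_{k\in G}$: one must choose the $W_k$ so that this multiplicity is at most a fixed power of $l$ (uniformly in the scale), since that power gets absorbed into the $l^{\tau'+\cdots}$ prefactors and ultimately feeds into the inequalities \eqref{para}. Everything else is a routine, if somewhat lengthy, application of the Sobolev-norm calculus recalled above.
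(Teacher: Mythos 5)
Your plan is essentially the paper's proof: for each $k\in G$ pick a single good $l$-cube containing $k$ at depth $\ge l/2$, use the Poisson/geometric-resolvent identity to write the $k$-th component of $\mathcal{A}u=h$ as a local Green's-function term plus a long-range correction supported where $|j-j'|\ge l/2$, split the correction into the parts hitting $G$ and $B$ to get $u_G=-\Gamma^G_Gu_G-\Gamma^B_Gu_B+\mathcal{L}h$, bound $\Gamma$ and $\mathcal{L}$ by the columns estimate \eqref{cl1}, the smoothing property \eqref{sp1}--\eqref{sp2}, and interpolation \eqref{ip1}--\eqref{ip3}, then invert $\mathcal{I}+\Gamma^G_G$ via the perturbation argument \eqref{pl2}--\eqref{pl3} (equivalently your Neumann series) using the first inequality in \eqref{para} to get $\|\Gamma\|_{s_0}\le 1/2$. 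So the structure, the lemmas invoked, and the final definitions $\mathcal{N}=-(\mathcal{I}+\Gamma^G_G)^{-1}\Gamma^B_G$, $\mathcal{M}=(\mathcal{I}+\Gamma^G_G)^{-1}\mathcal{L}$ all match.

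One point in your sketch is off and would lead you astray if you carried it out literally: the ``overlap multiplicity'' of the cubes $\{W_k\}_{k\in G}$ is irrelevant and does not contribute any power of $l$. Because the matrices $\Gamma$ and $\mathcal{L}$ are defined \emph{column by column} --- the $k$-th column is read off from the single cube $W_k$ assigned to $k$ --- no entry is ever a sum over the (possibly very many) cubes that happen to contain a given site, so no multiplicity factor appears. (The paper even flags this potential confusion in a footnote.) The extra $l^{s_0}$ in $\|\mathcal{M}\|_{s_0}\le C(s_0,d)l^{\tau'+(1+\delta)s_0}$ comes instead from a different bookkeeping step: the columns estimate \eqref{cl1} gives $\|\mathcal{L}\|_{s_0}\le C\sup_k\|\mathcal{L}_{\{k\}}\|_{2s_0}$, and since each column $\mathcal{L}_{\{k\}}$ is supported in $|k'-k|\le 2l$, the smoothing property \eqref{sp2} converts $\|\mathcal{L}_{\{k\}}\|_{2s_0}\le (2l)^{s_0}\|\mathcal{L}_{\{k\}}\|_{s_0}$, producing the $l^{s_0}$. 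Similarly, in the $\Gamma$ bound the gain factor $l^{-(1-\delta)r_1+\tau'+2s_0}$ is obtained by applying smoothing \eqref{sp1} to the column $\Gamma_{\{k\}}$ (vanishing for $|k'-k|<l/2$) to pass from $\|\cdot\|_{2s_0}$ up to $\|\cdot\|_{r_1}$, and then using $\|G_{F_k}\|_{r_1}\le l^{\tau'+\delta r_1}$; the $\delta r_1$ piece comes from the Green's function, not from smoothing the $\mathcal{T}$-block. With that reallocation, the rest of your computation goes through as you describe.
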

\begin{proof}
Fix $k\in G$. Then there exists some $l$-cube $F_k=\Lambda_l(k_1)$ such that $k\in F_k$, ${\rm dist}(k,X\setminus F_k)\geq l/2$ and $F_k$ is $(E,\delta)$-\textbf{good}. Define $\mathcal{Q}_k=\lambda^{-1} G_{F_k}(E)\mathcal{T}_{F_k}^{X\setminus F_k}\in \mathbf{M}_{F_k}^{X\setminus F_k}$. Since $F_k$ is $(E,\delta)$-\textbf{good} and using the \textbf{Interpolation property} \eqref{ip3}, we obtain (since $\lambda\geq 1$)
\begin{align}\label{qr}
\|\mathcal{Q}_k\|_{r_1}\leq C(r_1)\|G_{F_k}(E)\|_{r_1}\|\mathcal{T}_X\|_{r_1}\leq C({r_1})\|\mathcal{T}_X\|_{r_1}l^{\tau'+\delta {r_1}}.
\end{align}
By the \textbf{Interpolation property} \eqref{ip1} and the \textbf{Smoothing property} \eqref{sp2},  for $s\geq s_0$ we have  (if $|k-k'|>2l$, then $G_{F_k}(E)(k',k)=0$)
\begin{align}
\nonumber\|\mathcal{Q}_k\|_{s+s_0}&\leq C(s)(\|G_{F_k}(E)\|_{s+s_0}\|\mathcal{T}_X\|_{s_0}+\|G_{F_k}(E)\|_{s_0}\|\mathcal{T}_X\|_{s+s_0})\\
\nonumber&\leq C(s)((2l)^{s}\|G_{F_k}(E)\|_{s_0}\|\mathcal{T}_X\|_{s_0}+l^{\tau'+\delta s_0}\|\mathcal{T}_X\|_{s+s_0})\\
\label{qd}&\leq C(s,d)l^{\tau'+\delta s_0}(l^{s}\|\mathcal{T}_X\|_{s_0}+\|\mathcal{T}_X\|_{s+s_0}).
\end{align}

We now vary $k\in G$. Define the following operators\footnote{Both  $\Gamma$ and $\mathcal{L}$ are globally well-defined since we define the operators ``{column by column}''. More precisely, we have shown $G\subset\bigcup\limits_{k\in G}F_k$, and $k\in F_k$ for each $k\in G$. From the definition,  we have {$\Gamma(\cdot,k)$ and $\mathcal{L}(\cdot,k)$ come from that of ${G}_{F_k}(\cdot,k)$}. One may argue that it is likely that there exists $(m,m')$ such that $m'\in F_{k}\cap F_{k'}\neq \emptyset$ for some $k'\neq k$. As a result, it is likely that ${G}_{F_{k}}(m,m')\neq {G}_{F_{k'}}(m,m')$ and $\mathcal{L}(m,m')$ (or $\Gamma(m,m')$) is not uniquely defined! In fact, this is not the case since our definition of $\mathcal{L}(m,m')$ (or $\Gamma(m,m')$) comes from the column  ${G}_{F_{m'}}(\cdot,m')$ rather than that ${G}_{F_k}(\cdot,m')$ or ${G}_{F_{k'}}(\cdot,m')$.}:
\begin{equation*}
\Gamma (k',k)=\left\{\begin{aligned}
&0,\ {\rm for\ } k'\in F_k,\\
&\mathcal{Q}_k(k',k), \ \mathrm{for}\ k'\in X\setminus F_k,
\end{aligned}\right.
\end{equation*}
and
\begin{equation*}
\mathcal{L} (k',k)=\left\{\begin{aligned}
&G_{F_k}(E)(k',k),\ {\rm for\ } k'\in F_k,\\
&0, \ \mathrm{for}\ k'\in X\setminus F_k.
\end{aligned}\right.
\end{equation*}
From \eqref{auh}, we have
\begin{align}\label{ug}
u_G+\Gamma u=\mathcal{L}h.
\end{align}
We estimate $\Gamma\in \mathbf{M}_G^X$. Fix $k\in G$. Note that if $k'\in X\setminus F_k$, then $|k-k'|\geq l/2$. 
This implies $\Gamma_{\{k\}}(k',k)=0$
for $|k'-k|<l/2$. By  the \textbf{Columns estimate}  \eqref{cl1},   \eqref{qr} and the \textbf{Smoothing property} \eqref{sp1}, we obtain
\begin{align}
\nonumber\|\Gamma\|_{s_0}&\leq C(s_0,d)\sup_{k\in G}\|\Gamma_{\{k\}}\|_{2s_0}\\
\nonumber&\leq C(s_0,d) \sup_{k\in G}(l/2)^{-{r_1}+2s_0}\|\Gamma_{\{k\}}\|_{{r_1}}\\
\nonumber&\leq C(s_0,d) \sup_{k\in G}(l/2)^{-{r_1}+2s_0}\|\mathcal{Q}_{k}\|_{{r_1}}\\
\label{gmd}&\leq C(r_1,s_0,d)\|\mathcal{T}_X\|_{r_1}l^{-(1-\delta){r_1}+\tau'+2s_0}.
\end{align}
Similarly, for $s\geq s_0$, we obtain by recalling \eqref{qd}
\begin{align}
\nonumber\|\Gamma\|_s&\leq C(s_0,d)\sup_{k\in G}\|\Gamma_{\{k\}}\|_{s+s_0}\\
\nonumber&\leq C(s_0,d) \sup_{k\in G}\|\mathcal{Q}_{k}\|_{s+s_0}\\
\label{gms}&\leq C(s,s_0,d)l^{\tau'+\delta s_0}(l^{s}\|\mathcal{T}_X\|_{s_0}+\|\mathcal{T}_X\|_{s+s_0}).
\end{align}
We then estimate $\mathcal{L}\in \mathbf{M}_G^X$. Fix $k\in G$. By the definition of $F_k$, if $|k'-k|>2l$, then $k'\notin F_k$. This implies $\mathcal{L}_{\{k\}}(k',k)=0$ for $|k'-k|>2l$.
By the \textbf{Columns estimate} \eqref{cl1} and the \textbf{Smoothing property} \eqref{sp2},
we have for $s\geq 0$,
\begin{align}
\nonumber\|\mathcal{L}\|_{s+s_0}&\leq C(s_0,d)\sup_{k\in G}\|\mathcal{L}_{\{k\}}\|_{s+2s_0}\\
\nonumber&\leq C(s_0,d) \sup_{k\in G}(2l)^{s+s_0}\|\mathcal{L}_{\{k\}}\|_{s_0}\\
\nonumber&\leq C(s,s_0,d) \sup_{k\in G}l^{s+s_0}\|G_{F_k}(E)\|_{s_0}\\
\label{lsd}&\leq C(s,s_0,d)l^{s+\tau'+(1+\delta)s_0}.
\end{align}

Notice that we have ${-(1-\delta){r_1}+\tau'+2s_0<0}$. Thus for $${l\geq l_0(\tau',\delta,{r_1},s_0,d)>0},$$
 we have since \eqref{gmd} $\|\Gamma\|_{s_0}\leq 1/2$. Recalling the \textbf{Perturbation argument} \eqref{pl2}--\eqref{pl3}, we have that $\mathcal{I}+\Gamma_G^G$ is invertible and satisfies
\begin{align}
\label{ggd}\|(\mathcal{I}+\Gamma_G^G)^{-1}\|_{s_0}&\leq 2,\\
\label{ggs}\|(\mathcal{I}+\Gamma_G^G)^{-1}\|_s&\leq C(s) \|\Gamma\|_s\ {\rm for}\ s\geq s_0.
\end{align}
From \eqref{ug}, we have
\begin{align*}
u_G=-(\mathcal{I}+\Gamma_G^G)^{-1}\Gamma_G^{B}u_B+(\mathcal{I}+\Gamma_G^G)^{-1}\mathcal{L}h
\end{align*}
and then
\begin{align}\label{mnl}
\mathcal{N}=-(\mathcal{I}+\Gamma_G^G)^{-1}\Gamma_G^{B}, \  \mathcal{M}=(\mathcal{I}+\Gamma_G^G)^{-1}\mathcal{L}.
\end{align}
Recalling the \textbf{Interpolation property} \eqref{ip1} and  since \eqref{ggd}--\eqref{mnl}, we have
\begin{align*}
\|\mathcal{N}\|_{s_0}&\leq \|(\mathcal{I}+\Gamma_G^G)^{-1}\|_{s_0}\|\Gamma\|_{s_0}\leq C({r_1},s_0,d)\|\mathcal{T}_X\|_{r_1}l^{-(1-\delta){r_1}+\tau'+2s_0},\\
\|\mathcal{M}\|_{s_0}&\leq \|(\mathcal{I}+\Gamma_G^G)^{-1}\|_{s_0} \|\mathcal{L}\|_{s_0}\leq C(s_0,d)l^{\tau'+(1+\delta)s_0},
\end{align*}
and for $s\geq s_0$,
\begin{align*}
\nonumber\|\mathcal{N}\|_s&\leq C(s)(\|(\mathcal{I}+\Gamma_G^G)^{-1}\|_s\|\Gamma\|_{s_0}+\|(\mathcal{I}+\Gamma_G^G)^{-1}\|_{s_0}\|\Gamma\|_s)\\
\nonumber&\leq C(s)\|\Gamma\|_s\\
\nonumber&\leq C(s,s_0,d)l^{\tau'+\delta s_0}(l^{s}\|\mathcal{T}_X\|_{s_0}+\|\mathcal{T}_X\|_{s+s_0})\ ({\rm since} \ \eqref{gms}),\\
\|\mathcal{M}\|_s& \leq C(s)(\|(\mathcal{I}+\Gamma_G^G)^{-1}\|_s\|\mathcal{L}\|_{s_0}+\|(\mathcal{I}+\Gamma_G^G)^{-1}\|_{s_0}\|\mathcal{L}\|_s)\\
&\leq C(s)({\|\Gamma\|_s\|\mathcal{L}\|_{s_0}}+\|\Gamma\|_{s_0}\|\mathcal{L}\|_s) \\
&\leq C(s,s_0,d) l^{2\tau'+(1+2\delta) s_0}(l^{s}\|\mathcal{T}_X\|_{s_0}+\|\mathcal{T}_X\|_{s+s_0})\ ({\rm since} \ \eqref{gmd}-\eqref{lsd}).
\end{align*}
\end{proof}

{\bf Step 2: Reduction on \textbf{bad} sites}
\begin{lem} Let $l\geq l_0(\tau',\delta,{r_1},s_0,d)>0$. We have
\begin{align}\label{apub}
\mathcal{A}'u_B=\mathcal{Z}h,
\end{align}
where
\begin{align*}
\mathcal{A}'=\mathcal{A}_X^B+\mathcal{A}_X^G\mathcal{N}\in\mathbf{M}^B_X,\ \mathcal{Z}={\mathcal{I}}-\mathcal{A}_X^G\mathcal{M}\in\mathbf{M}^X_X
\end{align*}
satisfy for $s\geq s_0$,
\begin{align}
\label{apzd1}\|\mathcal{A}'\|_{s_0}&\leq C(M)(1+\|\mathcal{T}_X\|_{s_0}),\\
\label{apzd2} \|\mathcal{Z}\|_{s_0}&\leq C(M,s_0,d)(1+\|\mathcal{T}_X\|_{s_0})l^{\tau'+(1+\delta)s_0},\\
\label{aps}\|\mathcal{A}'\|_s&\leq C(M,s,s_0,d)(1+\|\mathcal{T}_X\|_{s})l^{\tau'+\delta s_0}(l^{s}\|\mathcal{T}_X\|_{s_0}+\|\mathcal{T}_X\|_{s+s_0}),\\
\label{zs}\|\mathcal{Z}\|_s&\leq  C(M,s,s_0,d)(1+\|\mathcal{T}_X\|_{s})l^{2\tau'+(1+2\delta) s_0}(l^{s}\|\mathcal{T}_X\|_{s_0}+\|\mathcal{T}_X\|_{s+s_0}).
\end{align}
Moreover, $(\mathcal{A}^{-1})_B^X$ is a left inverse of $\mathcal{A}'$.
\end{lem}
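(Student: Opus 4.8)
The plan is to obtain the identity \eqref{apub} by a block decomposition of the equation $\mathcal{A}u=h$, then to produce the four Sobolev estimates \eqref{apzd1}--\eqref{zs} from the interpolation toolbox already recorded, the left-inverse assertion being essentially formal. Concretely, split $\mathcal{A}=H_X-E$ into its two block columns indexed by $X=B\cup G$, so that $\mathcal{A}u=\mathcal{A}_X^Bu_B+\mathcal{A}_X^Gu_G$ for every $u\in\mathbb{C}^X$. Substituting the Step~1 reduction $u_G=\mathcal{N}u_B+\mathcal{M}h$ of \eqref{ugub} into \eqref{auh} gives
\[
\bigl(\mathcal{A}_X^B+\mathcal{A}_X^G\mathcal{N}\bigr)u_B=h-\mathcal{A}_X^G\mathcal{M}h ,
\]
which is exactly $\mathcal{A}'u_B=\mathcal{Z}h$ with $\mathcal{A}'$ and $\mathcal{Z}$ as defined; this is the only place in which the precise form of $\mathcal{N}$ and $\mathcal{M}$ is used.

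For the norm bounds I would start from $\mathcal{A}=\lambda^{-1}\mathcal{T}_X+\bigl(V_\omega(n)-E\bigr)\delta_{nn'}$ (restricted to $X$), where $\lambda\ge1$, $\sup_n|V_\omega(n)|\le M$, and $|E|$ is bounded by a fixed constant (recall $E\in I$, $|I|\le1$, and $I$ meets $[-\|\mathcal{T}\|-M,\|\mathcal{T}\|+M]$). A diagonal matrix has Sobolev norm independent of $s$, since only the $k=0$ term of Definition~\ref{snorm} contributes; hence the diagonal part of $\mathcal{A}$ has $\|\cdot\|_s\le C(M)$ for all $s$, and, using $\|\lambda^{-1}\mathcal{T}_X\|_s\le\|\mathcal{T}_X\|_s$ together with the fact that restricting rows and columns does not increase the Sobolev norm, $\|\mathcal{A}_X^B\|_s,\ \|\mathcal{A}_X^G\|_s\le\|\mathcal{A}\|_s\le C(M)\bigl(1+\|\mathcal{T}_X\|_s\bigr)$. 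I would then feed this, together with the Step~1 bounds \eqref{mnd}, \eqref{ms}, \eqref{ns} on $\mathcal{N}$ and $\mathcal{M}$, into the interpolation inequalities: for the $s_0$-norms of $\mathcal{A}_X^G\mathcal{N}$ and $\mathcal{A}_X^G\mathcal{M}$ use \eqref{ip2} and $\|\mathcal{N}\|_{s_0}\le1/2$ from \eqref{mnd}; for $s>s_0$ use \eqref{ip1}, splitting into the $\|\mathcal{A}_X^G\|_{s_0}\,\|\cdot\|_s$ and $\|\mathcal{A}_X^G\|_s\,\|\cdot\|_{s_0}$ contributions. Adding $\|\mathcal{A}_X^B\|_s$ and $\|\mathcal{I}\|_s=\sqrt{C_0(s_0)}$ (where $\mathcal{I}$ is the identity on $\mathbb{C}^X$), and absorbing universal constants --- legitimate since $\|\mathcal{T}_X\|_{s_0}\ge\sqrt{C_0(s_0)}$ (because $X$ contains a nearest-neighbour pair) and $l\ge1$ --- gives precisely \eqref{apzd1}--\eqref{zs}. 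I expect this bookkeeping to be the bulk of the work, yet entirely routine; the only care needed is matching the right interpolation variant to the $s_0$ versus $s>s_0$ case and tracking which of $\|\mathcal{M}\|_s$, $\|\mathcal{N}\|_s$ feeds which bound.

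Finally, for the left-inverse claim, the hypothesis $\|G_{\Lambda_L(n)}(E)\|\le L^\tau$ guarantees that $\mathcal{A}$ is invertible, so $(\mathcal{A}^{-1})_B^X$ is well defined. The key point is that $\mathcal{N}$ is a fixed matrix (carrying no $h$-term), so for every $v\in\mathbb{C}^B$,
\[
\mathcal{A}'v=\mathcal{A}_X^Bv+\mathcal{A}_X^G(\mathcal{N}v)=\mathcal{A}w,\qquad w\in\mathbb{C}^X,\quad w_B=v,\quad w_G=\mathcal{N}v ;
\]
applying $\mathcal{A}^{-1}$ and reading off the $B$-component yields $(\mathcal{A}^{-1})_B^X\mathcal{A}'v=w_B=v$, that is, $(\mathcal{A}^{-1})_B^X\mathcal{A}'=\mathcal{I}$ on $\mathbb{C}^B$. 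In short, $\mathcal{A}'$ is the restriction of $\mathcal{A}$ to the graph of $\mathcal{N}$, and a left inverse of $\mathcal{A}'$ is obtained simply by discarding the $G$-rows of $\mathcal{A}^{-1}$. The only genuine difficulty anywhere in the lemma is the constant bookkeeping in the norm estimates; there is no conceptual obstacle.
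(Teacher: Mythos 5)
Your proposal is correct and follows the same route as the paper: bound $\|\mathcal{A}\|_s\le C(M)(1+\|\mathcal{T}_X\|_s)$ using the a priori bound on $|V_\omega(n)-E|$, then apply the interpolation properties \eqref{ip1}--\eqref{ip2} together with the Step~1 estimates on $\mathcal{N}$ and $\mathcal{M}$. You also correctly supply the short direct verification that $(\mathcal{A}^{-1})_B^X\mathcal{A}'=\mathcal{I}$, which the paper simply asserts as ``easy to see.''
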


\begin{proof}
Since $I\cap[-\|\mathcal{T}\|-M,\|\mathcal{T}\|+M]\neq\emptyset$, $\sup_{\omega,n}{|V_\omega(n)|}\leq M$, $|I|\leq 1$ and $\lambda\geq1$, we have for all $E\in I$ and $n\in{\Z}^d$,
\begin{align*}
|V_\omega(n)-E|\leq \|\mathcal{T}\|+2M+1.
\end{align*}
Thus for any $s\geq 0$, we obtain
\begin{align}\label{as}
\|\mathcal{A}\|_s=\|{H}_X-E\|_s\leq \|\lambda^{-1}\mathcal{T}_X\|_s+\|\mathcal{T}\|+2M+1\leq 2(1+\|\mathcal{T}_X\|_s+\|\mathcal{T}\|+M).
\end{align}
From \eqref{mnd}, \eqref{as} and the \textbf{Interpolation property} \eqref{ip1}--\eqref{ip2}, we have
\begin{align*}
&\|\mathcal{A}'\|_{s_0}\leq \|\mathcal{A}\|_{s_0}+\|\mathcal{A}\|_{s_0}\|\mathcal{N}\|_{s_0}\leq C(M)(1+\|\mathcal{T}_X\|_{s_0})\ ({\rm since}\ \|\mathcal{T}_X\|\leq \|\mathcal{T}_X\|_{s_0} ),\\
&\|\mathcal{Z}\|_{s_0}\leq 1+\|\mathcal{A}\|_{s_0}\|\mathcal{M}\|_{s_0}\leq C(M,s_0,d)(1+\|\mathcal{T}_X\|_{s_0})l^{\tau'+(1+\delta)s_0},
\end{align*}
and for $s\geq s_0$,
\begin{align*}
\|\mathcal{A}'\|_s&\leq \|\mathcal{A}\|_s+C(s)(\|\mathcal{A}\|_s\|\mathcal{N}\|_{s_0}+\|\mathcal{A}\|_{s_0}\|\mathcal{N}\|_s)\\
&\leq C(M,s,s_0,d)(1+\|\mathcal{T}_X\|_{s})l^{\tau'+\delta s_0}(l^{s}\|\mathcal{T}_X\|_{s_0}+\|\mathcal{T}_X\|_{s+s_0}),\\
\|\mathcal{Z}\|_s&\leq 1+C(s)(\|\mathcal{A}\|_s\|\mathcal{M}\|_{s_0}+\|\mathcal{A}\|_{s_0}\|\mathcal{M}\|_s)\\
&\leq C(M,s,s_0,d)(1+\|\mathcal{T}_X\|_{s})l^{2\tau'+(1+2\delta) s_0}(l^{s}\|\mathcal{T}_X\|_{s_0}+\|\mathcal{T}_X\|_{s+s_0}).
\end{align*}
It is easy to see $(\mathcal{A}^{-1})_B^X$ is a left inverse of $\mathcal{A}'$.
\end{proof}

\begin{lem}[Left inverse of $\mathcal{A}'$] Let $l\geq l_0(\|\mathcal{T}_X\|_{r_1}, M, C_\star, \tau, \xi, \tau',\delta, {r_1}, s_0, d)>0$. Then $\mathcal{A}'$ has a left inverse $\mathcal{V}$ satisfying for $s\geq d$,
\begin{align}\label{vs0}
& \|\mathcal{V}\|_{s_0}\leq C(C_\star,s_0,d)l^{\alpha\tau+(2+2\xi)s_0},
\end{align}
and for $s>s_0$,
\begin{align}\label{vs}
&\|\mathcal{V}\|_s\leq  C(M,C_\star,s,s_0,d)(1+\|\mathcal{T}_X\|_{s})l^{\tau'+2\alpha\tau+(4+4\xi+\delta)s_0}(l^{(1+\xi)s}\|\mathcal{T}_X\|_{s_0}+\|\mathcal{T}_X\|_{s+s_0}).
\end{align}
\end{lem}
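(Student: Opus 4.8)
The plan is to construct the left inverse $\mathcal{V}$ of $\mathcal{A}'$ by a resolvent-type perturbation built on the decomposition of $B$ into well-separated clusters $\Omega_j$. First I would observe that since $\mathcal{A}'$ has a left inverse at all (namely $(\mathcal{A}^{-1})_B^X$ from the previous lemma), and since $\|\mathcal{A}^{-1}\|=\|G_{\Lambda_L(n)}(E)\|\le L^\tau$, the operator norm of \emph{some} left inverse is under control: $\|\mathcal{V}\|\le L^\tau\le l^{\alpha\tau}$. The point of the lemma is to upgrade this $\ell^2$ bound to the Sobolev norms $\|\cdot\|_{s_0}$ and $\|\cdot\|_s$, exploiting the geometry that the bad sites live in clusters of diameter $\lesssim l^{1+\xi}$ that are $l^{1+\xi}$-separated. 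So I would start from a block-diagonal (in the cluster decomposition) approximate inverse: split $\mathcal{A}'$ into its ``near-diagonal within a single $\Omega_j$'' part plus a remainder that connects distinct clusters; invert the former block by block using the a priori $\ell^2$ bound on each block, then treat the inter-cluster remainder as a perturbation.

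The key steps, in order, would be: (1) For each cluster $\Omega_j$, the restriction $(\mathcal{A}')_{\Omega_j}^{\Omega_j}$-type block is a finite matrix of size $\lesssim l^{d(1+\xi)}$; use the a priori operator-norm bound $\|\mathcal{V}\|\le l^{\alpha\tau}$ together with the estimate \eqref{apzd1} on $\|\mathcal{A}'\|_{s_0}$ and the conversion $\|\mathcal{M}\|_s\le N^{s+s_0}\|\mathcal{M}\|$ from \eqref{sp2} (with $N\sim l^{1+\xi}$) to pass to the $\|\cdot\|_{s_0}$ norm, giving the block-diagonal inverse $\mathcal{V}_0$ with $\|\mathcal{V}_0\|_{s_0}\lesssim l^{\alpha\tau+(2+2\xi)s_0}$ — this is exactly the claimed \eqref{vs0} once we check $(2+2\xi)s_0$ absorbs the size-to-norm conversions. (2) Estimate the inter-cluster piece $\mathcal{P}$: since the clusters are $l^{1+\xi}$-separated, $\mathcal{P}(k,k')=0$ for $|k-k'|< l^{1+\xi}$, so by the \textbf{Smoothing property} \eqref{sp1} we gain a factor $l^{-(1+\xi)(r_1-s_0)}$ over $\|\mathcal{A}'\|_{r_1}$, which by \eqref{aps} is small — this uses the first inequality of \eqref{para}, $-(1-\delta)r_1+\tau'+2s_0<0$, to ensure $\|\mathcal{P}\|_{s_0}\|\mathcal{V}_0\|_{s_0}\le 1/2$ for $l$ large. (3) Apply the \textbf{Perturbation argument} \eqref{pl2}--\eqref{pl3} to $\mathcal{A}'=\mathcal{A}'_{\mathrm{diag}}+\mathcal{P}$ (after checking $\mathcal{A}'_{\mathrm{diag}}$ has left inverse $\mathcal{V}_0$), obtaining the left inverse $\mathcal{V}$ with $\|\mathcal{V}\|_{s_0}\le 2\|\mathcal{V}_0\|_{s_0}$ and $\|\mathcal{V}\|_s\le C(s)(\|\mathcal{V}_0\|_s+\|\mathcal{V}_0\|_{s_0}^2\|\mathcal{P}\|_s)$. (4) For the high-norm bound \eqref{vs}, estimate $\|\mathcal{V}_0\|_s$ via the \textbf{Columns estimate} \eqref{cl1} and the size-to-norm conversion on each block (each block supported in a set of diameter $\lesssim l^{1+\xi}$, giving the $l^{(1+\xi)s}$ factor), and estimate $\|\mathcal{P}\|_s\le\|\mathcal{A}'\|_s$ via \eqref{aps}; combining with $\|\mathcal{V}_0\|_{s_0}^2\lesssim l^{2\alpha\tau+(4+4\xi)s_0}$ produces the stated powers $l^{\tau'+2\alpha\tau+(4+4\xi+\delta)s_0}$ and the two-term structure $(l^{(1+\xi)s}\|\mathcal{T}_X\|_{s_0}+\|\mathcal{T}_X\|_{s+s_0})$.

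The main obstacle I expect is step (1): correctly converting the single a priori $\ell^2$ bound on the \emph{global} left inverse into a controlled $\|\cdot\|_{s_0}$ bound on the \emph{block-diagonal} part. One must be careful that the block-diagonal truncation of a left inverse need not itself be a left inverse of the block-diagonal truncation of $\mathcal{A}'$; the clean way around this is to instead \emph{directly} invert each finite block $(\mathcal{A}')_{\Omega_j}^{\Omega_j}$ — showing it is invertible as a square matrix with norm $\le l^{\alpha\tau}$ by relating it to the corresponding sub-block of $G_{\Lambda_L(n)}(E)$ via a Schur-complement/Neumann argument using that every site of $G$ is $(l,E,\delta)$-good — and only then pass to Sobolev norms. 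The bookkeeping of which power of $l^{1+\xi}$ arises from "diameter" versus "number of clusters" is delicate, and it is precisely here that one needs the separation $l^{1+\xi}$ rather than merely $l$; this is the feature highlighted in Remark \ref{clrem}. The remaining steps are routine applications of the Sobolev-norm calculus already collected at the start of the proof, and the inequalities \eqref{para} are designed so that all the error terms close.
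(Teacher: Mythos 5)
Your overall strategy (perturbation off a cluster-localized approximate inverse, using the a priori $\ell^2$ bound and the separation of the $\Omega_j$) is the right one, and steps (2)--(4) of your outline do match the structure of the paper's argument. But step (1) contains a genuine gap, and it is precisely the point you yourself flag as the ``main obstacle''.

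You propose to directly invert each square block $(\mathcal{A}')_{\Omega_j}^{\Omega_j}$ ``via a Schur-complement/Neumann argument''. Nothing in the hypotheses guarantees these square sub-blocks of the rectangular matrix $\mathcal{A}'\in\mathbf{M}^B_X$ are invertible, let alone with norm $\leq l^{\alpha\tau}$: the only $\ell^2$ information available is the global bound $\|\mathcal{A}^{-1}\|\leq L^\tau$, and a submatrix of an invertible matrix can be badly conditioned. The paper avoids this entirely. It does \emph{not} invert any square blocks; it keeps the rectangular structure. Concretely, it sets $\widetilde\Omega_j$ to be the $l^{1+\xi}/4$-neighborhood of $\Omega_j$, defines $\mathcal{D}$ as the restriction of $\mathcal{A}'$ to $\bigcup_j(\Omega_j\times\widetilde\Omega_j)$, and writes $\mathcal{R}=\mathcal{A}'-\mathcal{D}$. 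Since $\mathcal{R}$ vanishes for $|k-k'|<l^{1+\xi}/4$, the Smoothing property gives $\|\mathcal{R}\|_{s_0}\lesssim l^{-\xi r_1+\tau'+(1+\delta+2\xi)s_0}$, and then $\|\mathcal{R}\|\cdot\|(\mathcal{A}^{-1})_B^X\|\leq 1/2$ follows from the \emph{second} inequality of \eqref{para}, $-\xi r_1+\tau'+\alpha\tau+(3+\delta+4\xi)s_0<0$ (not the first, $-(1-\delta)r_1+\tau'+2s_0<0$, which you cite --- that one has no $\alpha\tau$ term and cannot absorb the a priori $L^\tau$ bound). So $\mathcal{D}$ has a left inverse $\mathcal{W}$ with $\|\mathcal{W}\|\leq 2L^\tau$, but still only an $\ell^2$ bound.

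The ingredient you are missing is the algebraic fact (from Berti--Bolle) that the truncation $\mathcal{W}_0$ of $\mathcal{W}$ to the \emph{same} cluster supports is again a left inverse of $\mathcal{D}$. This works because the $\widetilde\Omega_j$ are pairwise disjoint (the separation $l^{1+\xi}$ strictly exceeds twice the inflation $l^{1+\xi}/4$): in the product $\mathcal{W}_0\mathcal{D}$, any contribution coupling distinct clusters vanishes automatically, so $\mathcal{W}_0\mathcal{D}=\mathcal{W}\mathcal{D}=\mathcal{I}$. Since $\mathcal{W}_0$ has support diameter $\leq 2C_\star l^{1+\xi}$, \eqref{sp2} converts $\|\mathcal{W}\|\leq 2L^\tau$ into $\|\mathcal{W}_0\|_s\lesssim l^{(1+\xi)(s+s_0)+\alpha\tau}$, which is where \eqref{vs0} and the $l^{(1+\xi)s}$ factor in \eqref{vs} come from. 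A second application of the Perturbation argument to $\mathcal{A}'=\mathcal{D}+\mathcal{R}$ with left inverse $\mathcal{W}_0$ then gives \eqref{vs0}--\eqref{vs}. So the resolution of your ``truncation of a left inverse need not be a left inverse of the truncation'' worry is not to invert blocks directly, but to (a) first pass to an $\ell^2$-controlled left inverse $\mathcal{W}$ of the \emph{cluster-localized} matrix $\mathcal{D}$, and (b) exploit that for matrices supported on disjoint rectangular blocks, the truncation of a left inverse to the transposed block pattern is still a left inverse. Without (b), the $\ell^2$-to-Sobolev upgrade does not go through.
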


\begin{proof}
The proof is based on the perturbation of left inverses as in \cite{BB13}. Let $\widetilde \Omega_j$ be the $l^{1+\xi}/4$-neighborhood of $\Omega_j$, i.e.,  $\widetilde \Omega_j=\{k\in\Z^d:\ {\rm dist}(k,\Omega_j)\leq l^{1+\xi}/4\}$.
Let $\mathcal{D}\in \mathbf{M}^B_X$ satisfy the following:
\begin{equation*}
\mathcal{D} (k,k')=\left\{\begin{aligned}
&\mathcal{A}'(k,k'),\ {\rm for\ } (k,k')\in \bigcup_{j}(\Omega_j\times\widetilde\Omega_j),\\
&0, \ \mathrm{for}\ (k,k')\notin \bigcup_{j}(\Omega_j\times\widetilde\Omega_j).
\end{aligned}\right.
\end{equation*}

We claim that $\mathcal{D}$ has a left inverse $\mathcal{W}$ satisfying $\|\mathcal{W}\|\leq 2L^{\tau}$. Let $|k-k'|<l^{1+\xi}/4$ and $\mathcal{R}=\mathcal{A}'-\mathcal{D}$.  Since $B=\bigcup_j\Omega_j$, we have  $k\in \Omega_j$ for some $j$, and then  $k'\in \widetilde\Omega_j$, which implies  $\mathcal{R}(k,k')=0$. Then recalling the \textbf{Smoothing property} \eqref{sp1}, we obtain
\begin{align}
\nonumber\|\mathcal{R}\|_{s_0}&\leq (l^{1+\xi}/4)^{-{r_1}+2s_0}\|\mathcal{R}\|_{{r_1}-s_0}\leq (l^{1+\xi}/4)^{-{r_1}+2s_0}\|\mathcal{A}'\|_{{r_1}-s_0}\\
\nonumber&\leq C(M,{r_1},s_0,d)(1+\|\mathcal{T}_X\|_{r_1})\|\mathcal{T}_X\|_{r_1}l^{-\xi {r_1}+\tau'+(1+\delta+2\xi)s_0}\ ({\rm by}\ \eqref{aps})\\
\label{rd}&\leq C(M,{r_1},s_0,d)l^{-\xi {r_1}+\tau'+(1+\delta+2\xi)s_0}.
\end{align}
Thus recalling \eqref{ip4} and the assumption $\|\mathcal{A}^{-1}\|\leq L^\tau$, we have
\begin{align*}
\|\mathcal{R}\|\cdot\|(\mathcal{A}^{-1})_B^X\|&\leq \|\mathcal{R}\|_{s_0}\|{\mathcal{A}}^{-1}\|\\
&\leq C(M,{r_1},s_0,d)l^{-\xi {r_1}+\tau'+(1+\delta+2\xi)s_0}L^{\tau}\\
&\leq  C(M,{r_1},s_0,d)l^{-\xi {r_1}+\tau'+\alpha\tau+(1+\delta+2\xi)s_0}\leq 1/2\ ({\rm since}\ L=[l^\alpha]),
\end{align*}
where in the last inequality we use the fact that {$-\xi {r_1}+\tau'+\alpha\tau+(1+\delta+2\xi)s_0<0$ and $l\geq l_0(M,\alpha,\tau,\xi,\tau',\delta, {r_1},s_0,d)>0$}.
It follows from the \textbf{Perturbation argument} \eqref{pl5} that $\mathcal{D}$ has a left inverse $\mathcal{W}$ satisfying $\|\mathcal{W}\|\leq 2\|\mathcal{A}^{-1}\|\leq 2L^{\tau}$.

From \cite{BB13}, we know that
\begin{equation*}
\mathcal{W}_0 (k,k')=\left\{\begin{aligned}
&\mathcal{W}(k,k'),\ {\rm for\ } (k,k')\in \bigcup_{j}(\Omega_j\times\widetilde\Omega_j),\\
&0, \ \mathrm{for}\ (k,k')\notin \bigcup_{j}(\Omega_j\times\widetilde\Omega_j)
\end{aligned}\right.
\end{equation*}
is a left inverse of $\mathcal{D}$. We then estimate $\|\mathcal{W}_0\|_s$.  Since ${\rm diam}(\widetilde\Omega_j)\leq 2C_\star l^{1+\xi}$, we have $\mathcal{W}_0(k,k')=0$ if $|k-k'|> 2C_\star l^{1+\xi}$. Using the \textbf{Smoothing property }\eqref{sp2} yields for $s\geq 0$,
\begin{align}
\label{w0s}&\|\mathcal{W}_0\|_s\leq C(C_\star,s,s_0,d)l^{(1+\xi)(s+s_0)}\|\mathcal{W}\|\leq C(C_\star,s,s_0,d)l^{(1+\xi)(s+s_0)+\alpha\tau}.
\end{align}

Finally, recall  that $\mathcal{A}'=\mathcal{D}+\mathcal{R}$ and $\mathcal{W}_0$ is a left inverse of $\mathcal{D}$. We have by \eqref{rd} and \eqref{w0s},
\begin{align*}
\|\mathcal{R}\|_{s_0}\|\mathcal{W}_0\|_{s_0}\leq C(M,C_\star,{r_1},s_0,d)l^{-\xi {r_1}+\tau'+\alpha\tau+(3+\delta+4\xi)s_0}\leq 1/2
\end{align*}
since {$-\xi {r_1}+\tau'+\alpha\tau+(3+\delta+4\xi)s_0<0$ and $l\geq l_0(M,C_\star,\alpha,\tau,\xi,\tau',\delta, {r_1},s_0,d)>0$}.  Applying the {\bf Perturbation argument} \eqref{pl2}--\eqref{pl3} again implies that $\mathcal{A}'$ has a left inverse $\mathcal{V}$ satisfying
\begin{align*}
\|\mathcal{V}\|_{s_0}&\leq 2\|\mathcal{W}_0\|_{s_0}\leq C(C_\star,s_0,d)l^{\alpha\tau+(2+2\xi)s_0},\\
\|\mathcal{V}\|_s&\leq C(s)(\|\mathcal{W}_0\|_s+\|\mathcal{W}_0\|_{s_0}^2\|\mathcal{R}\|_s) \ ({\rm by}\ \eqref{pl3})\\
&\leq C(C_\star,s,s_0,d)l^{(1+\xi)(s+s_0)+\alpha\tau}\\
\nonumber&\ \ +C(M,C_\star,s,s_0,d)(1+\|\mathcal{T}_X\|_{s})l^{\tau'+2\alpha\tau+(4+4\xi+\delta)s_0}(l^{s}\|\mathcal{T}_X\|_{s_0}+\|\mathcal{T}_X\|_{s+s_0})\\
&\leq C(M,C_\star,s,s_0,d)(1+\|\mathcal{T}_X\|_{s})l^{\tau'+2\alpha\tau+(4+4\xi+\delta)s_0}(l^{(1+\xi)s}\|\mathcal{T}_X\|_{s_0}+\|\mathcal{T}_X\|_{s+s_0}).
\end{align*}

\end{proof}

{\bf Step 3: Completion of proof}

Combining \eqref{auh}, \eqref{ugub} and \eqref{apub} implies
\begin{align*}
u_G=\mathcal{M}h+\mathcal{N}u_B,u_B=\mathcal{V}\mathcal{Z}h.
\end{align*}
Thus
\begin{align*}
(\mathcal{A}^{-1})_B^X=\mathcal{V}\mathcal{Z}, \  (\mathcal{A}^{-1})_G^X=\mathcal{M}+\mathcal{N}(\mathcal{A}^{-1})^X_B.
\end{align*}
Then for $s\geq s_0$, we can obtain by using the \textbf{Interpolation property} \eqref{ip1} and the \textbf{Smoothing property} \eqref{sp2}
\begin{align*}
\|(\mathcal{A}^{-1})_B^X\|_s&\leq C(s)(\|\mathcal{V}\|_s\|\mathcal{Z}\|_{s_0}+\|\mathcal{V}\|_{s_0}\|\mathcal{Z}\|_s)\\
&\leq C(M,C_\star,s,s_0,d)(1+\|\mathcal{T}_X\|_{s})^2l^{2\tau'+2\alpha\tau+(5+4\xi+2\delta)s_0}(l^{(1+\xi)s}\|\mathcal{T}_X\|_{s_0}+\|\mathcal{T}_X\|_{s+s_0}) \\
&\ \ +C(M,C_\star,s,s_0,d)(1+\|\mathcal{T}_X\|_{s})l^{2\tau'+\alpha\tau+(3+2\delta+2\xi)s_0}(l^{s}\|\mathcal{T}_X\|_{s_0}+\|\mathcal{T}_X\|_{s+s_0})\\
&\ \  ({\rm by}\ \eqref{zs}\ {\rm and}\ \eqref{vs})\\
&\leq C(M,C_\star,s,s_0,d)(1+\|\mathcal{T}_X\|_{s})^2l^{2\tau'+2\alpha\tau+(5+4\xi+2\delta)s_0}(l^{(1+\xi)s}\|\mathcal{T}_X\|_{s_0}+\|\mathcal{T}_X\|_{s+s_0}).
\end{align*}
We obtain the similar bound for $\|(\mathcal{A}^{-1})_G^X\|_s$. Thus for any $s\in[s_0,{r_1}]$, we obtain
\begin{align*}
\|\mathcal{A}^{-1}\|_s&\leq \|(\mathcal{A}^{-1})_B^X\|_s+\|(\mathcal{A}^{-1})_G^X\|_s\\
&  C(M,C_\star,s,s_0,d)(1+\|\mathcal{T}_X\|_{r_1})^2l^{2\tau'+2\alpha\tau+(5+4\xi+2\delta)s_0}(l^{(1+\xi)s}\|\mathcal{T}_X\|_{s_0}+(2L)^{s_0} \|\mathcal{T}_X\|_{{r_1}})\\
&\leq C(M,C_\star,r_1,s_0,d)\|\mathcal{T}_X\|_{{r_1}}^2 L^{\alpha^{-1}(2\tau'+2\alpha\tau+(5+4\xi+2\delta)s_0)+s_0+\alpha^{-1}{(1+\xi)}s}\\
&\leq L^{\tau'+\frac{1+\xi}{\alpha} s},
\end{align*}
where in the last inequality we use the third inequality in \eqref{para} and  $$L>l\geq l_0(\|\mathcal{T}\|_{{r_1}}, M,C_\star,\alpha,\tau,\xi,\tau',\delta, {r_1},s_0,d)>0.$$

This finishes the proof of the \textbf{Coupling Lemma}.

\end{proof}

We are in a position to finish the proof of  Theorem \ref{msa}. 

{\bf Deterministic part}

\begin{lem}\label{jlem}
Fix $J\in 2\mathbb{N}$.  Assume \eqref{para} holds true and $1+\xi<\alpha$.  Assume further that any pairwise disjoint $(E,\delta)$-{\bf bad} $l$-cubes contained in $\Lambda_{L}(n)$ has number at most $J-1$ and $\|G_{\Lambda_L(n)}(E)\|\leq L^{\tau}$. Then  for $$l\geq l_0(M,J,\alpha,\tau,\xi,\tau',\delta, {r_1},s_0,d)>0,$$  we have $\Lambda_L(n)$ is $(E,\frac{1+\xi}{\alpha})$-{\bf good}.
\end{lem}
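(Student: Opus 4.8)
The plan is to reduce Lemma \ref{jlem} to the \textbf{Coupling Lemma} (Lemma \ref{klem}) by verifying, under the stated hypotheses, that $\Lambda_L(n)$ admits the geometric decomposition required there. The only hypothesis of the Coupling Lemma whose verification needs work is the second one: we must decompose $\Lambda_L(n) = B \cup G$ where $B$ is a disjoint union of clusters $\Omega_j$ of diameter $\lesssim l^{1+\xi}$, pairwise separated by at least $l^{1+\xi}$, and every site of $G$ is $(l,E,\delta)$-\textbf{good} w.r.t.\ $\Lambda_L(n)$. The first hypothesis (\eqref{para} and $1+\xi<\alpha$) and the third one ($\|G_{\Lambda_L(n)}(E)\| \le L^\tau$) are assumed verbatim in Lemma \ref{jlem}.

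First I would set $B$ to be the set of sites of $\Lambda_L(n)$ that are $(l,E,\delta)$-\textbf{bad} w.r.t.\ $\Lambda_L(n)$, and $G = \Lambda_L(n)\setminus B$; then $G$ consists of $(l,E,\delta)$-\textbf{good} sites by definition. The key combinatorial point is to control the connected components of $B$ after thickening. Since any family of pairwise disjoint $(E,\delta)$-\textbf{bad} $l$-cubes in $\Lambda_L(n)$ has cardinality at most $J-1$, a Vitali-type covering argument shows that the set of \textbf{bad} sites can itself be covered by at most $J-1$ cubes of side comparable to $l$: pick a maximal disjoint family of \textbf{bad} $l$-cubes $\{\Lambda_l(m_i)\}_{i\le N}$ with $N \le J-1$; by maximality every \textbf{bad} site lies within distance $O(l)$ of some $\Lambda_l(m_i)$, hence $B \subset \bigcup_{i\le N} \Lambda_{C l}(m_i)$ for a dimensional constant $C$. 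Now group these $N$ cubes into clusters by the transitive closure of the relation ``distance $< l^{1+\xi}$''; call the resulting clusters $\Omega_1,\dots,\Omega_q$ (with $q \le N \le J-1$) and take $\Omega_j$ to be the union of the $Cl$-cubes in the $j$-th group. Because each cluster is built from at most $J-1$ cubes chained together with gaps $< l^{1+\xi}$, one gets ${\rm diam}(\Omega_j) \le C_\star l^{1+\xi}$ with $C_\star = C_\star(J,\xi,d) > 1$ (here using $1+\xi<\alpha$ and $l$ large so that $Cl \le l^{1+\xi}$, i.e.\ the individual cube sizes are negligible compared to the chaining gaps), while distinct clusters are separated by at least $l^{1+\xi}$ by construction. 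This is exactly the feature emphasized in Remark \ref{clrem}: the number $J-1$ of \textbf{bad} cubes is fixed and scale-independent, so the diameter of the clusters does not grow beyond order $l^{1+\xi}$.

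With $B = \bigcup_j \Omega_j$ and $G$ as above, all three hypotheses of the Coupling Lemma hold, with $C_\star$ now depending only on $J,\xi,d$; hence Lemma \ref{klem} applies for $l \ge \underline{l}_0(\|\mathcal{T}\|_{r_1},M,C_\star,\alpha,\tau,\xi,\tau',\delta,r_1,s_0,d)$, and absorbing $C_\star = C_\star(J,\xi,d)$ into the constant gives the threshold $l_0(M,J,\alpha,\tau,\xi,\tau',\delta,r_1,s_0,d)$ in the statement. The conclusion is that $\Lambda_L(n)$ is $(E,\tfrac{1+\xi}{\alpha})$-\textbf{good}, as claimed. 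The main obstacle — really the only substantive step — is the combinatorial clustering: one must check carefully that chaining at most $J-1$ cubes of side $O(l)$ through gaps of size just under $l^{1+\xi}$ yields clusters of diameter at most $C_\star l^{1+\xi}$ with $C_\star$ independent of the scale $l$, and that the leftover separation between clusters is still at least $l^{1+\xi}$; this is where the hypothesis $1+\xi<\alpha$ and the largeness of $l$ enter, ensuring $l \ll l^{1+\xi} \ll L$.
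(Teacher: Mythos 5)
Your proposal is correct and takes essentially the same route as the paper: a Vitali/maximal-disjoint-family argument to extract at most $J-1$ pairwise disjoint bad $l$-cubes whose $O(l)$-thickenings cover all bad sites, a transitive-closure clustering at scale $\sim l^{1+\xi}$ to form the $\Omega_j$'s with $C_\star = C_\star(J)$, and an application of the Coupling Lemma. (One small quantitative caveat: the paper chains at scale $2l^{1+\xi}$ and thickens to $3l$-cubes so the inter-cluster separation after thickening is $> 2l^{1+\xi} - 10l > l^{1+\xi}$; if you chain at exactly $l^{1+\xi}$, be sure the distance is measured between the thickened cubes rather than their centers, or the post-thickening separation could dip just below the required $l^{1+\xi}$.)
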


\begin{proof}[{\bf Proof of Lemma \ref{jlem}}]
The main point here is to obtain the \textit{separation property} of $(E,\delta)$-\textbf{bad} $l$-cubes contained in $\Lambda_L(n)$.

Denote by $\Lambda_{l}(k^{(1)}),\cdots,\Lambda_l(k^{(t_0)})\subset\Lambda_L(n)$ all the $(E,\delta)$-\textbf{bad} $l$-cubes. Obviously,  $t_0\leq (2L+1)^d$. 
We first claim that there exists $\widetilde{Z}=\{m^{(1)},\cdots,m^{(t_1)}\}\subset Z=\{k^{(1)},\cdots,k^{(t_0)}\}$ such that $t_1\leq J-1$, $|m^{(i)}-m^{(j)}|>2l$ (for $ i\neq j$) and
\begin{align}\label{vitali}
\bigcup_{1\leq j\leq t_0}\Lambda_l(k^{(j)})\subset \bigcup_{1\leq j\leq t_1}\Lambda_{3l}(m^{(j)}).
\end{align}
This claim should be compared with the  Vitali covering argument. We prove this claim as follows. We start from $m^{(1)}=k^{(1)}$. Define $Z_1$ to be the set of all $k^{(j)}$ satisfying $|k^{(j)}-m^{(1)}|\leq 2l$. If $Z_1=Z$, then we stop the process, and it is easy to check  \eqref{vitali}  with $t_1=1.$ Otherwise, we have $Z\setminus Z_1\neq \emptyset$ and we can choose a $m^{(2)}\in Z\setminus Z_1$. Similarly,  let $Z_2$ be the set of all $k^{(j)}\in Z\setminus Z_1$ satisfying $|k^{(j)}-m^{(2)}|\leq l$. If $Z_2=Z\setminus Z_1$,  we stop the process and  \eqref{vitali} holds with $t_1=2$. Repeating this process and since $t_0<\infty$, we can obtain \eqref{vitali} for some $t_1\leq t_0.$ From the construction, we must have $|m^{(i)}-m^{(j)}|>2l$ for $i\neq j$, or equivalently $\Lambda_{l}(m^{(i)})\cap\Lambda_l(m^{(j)})=\emptyset$ for $i\neq j.$ Recalling the assumption of Lemma \ref{jlem}, we get $t_1\leq J-1.$ The proof of this claim is finished.

 We now separate further the clusters $\Lambda_{3l}(m^{(1)})\cap\Lambda_L(n) ,\cdots,\Lambda_{3l}(m^{(t_1)})\cap\Lambda_L(n)$. Define a relation $\bowtie$ on $\widetilde Z$  as follows. Letting $k,k'\in \widetilde Z$, we say $k\bowtie k'$ if there is a sequence $k_0,\cdots, k_q\in\widetilde Z$ ($q\geq 1$) satisfying $k_0=k,k_q=k'$ and
    \begin{align}\label{shi1}
    |k_j-k_{j+1}|\leq 2l^{1+\xi}\ {\rm for}\ \forall\ 0\leq j\leq q-1.
    \end{align}
It is easy to see $\bowtie$ is an equivalence relation on $\widetilde Z.$ As a result, we can partition $\widetilde Z$ into disjoint equivalent classes (\textit{w.r.t}  $\bowtie$), say $\pi_1,\cdots,\pi_{t_2}$ with $t_2\leq t_1$. We also have by \eqref{shi1}
\begin{align}
\label{shi2}|k-k'|&\leq 2Jl^{1+\xi}\  {\rm for}\ \forall\  k,k'\in\pi_j,\\
\label{shi3}{\rm dist}(\pi_{i},\pi_j)&>2l^{1+\xi}\ {\rm for}\ i\neq j.
 \end{align}
Correspondingly, we can define
$$ \Omega_j=\bigcup_{y\in\pi_j}(\Lambda_{3l}(y)\cap\Lambda_L(n)).$$
From \eqref{shi2} and \eqref{shi3}, we obtain
\begin{align*}
{\rm diam}(\Omega_j)&\leq 10Jl^{1+\xi}\ {\rm for}\ 1\leq j\leq t_2,\\
{\rm dist}(\Omega_{i},\Omega_j)&>2l^{1+\xi}-10l>l^{1+\xi}\ {\rm for}\ i\neq j.
\end{align*}
Moreover, since $B=\bigcup\limits_{1\leq j\leq t_2}\Omega_j$ contains all the $(E,\delta)$-\textbf{bad} $l$-cubes, it follows that if $\Lambda_l(k)\subset\Lambda_L(n)$ and $\Lambda_l(k)\cap (\Lambda_L(n)\setminus B)\neq \emptyset$, then $\Lambda_l(k)$ must be $(E,\delta)$-\textbf{good}. Let $G=\Lambda_L(n)\setminus B$. Then every $k\in G$ is $(E,\delta)$-\textbf{good} \textit{w.r.t} $\Lambda_L(n)$. Actually, if $k\in G\subset\Lambda_L(n)$, then there exists $\Lambda_l(k')\subset \Lambda_{L}(n)$ with $k\in\Lambda_l(k')$ such that ${\rm dist}(k,\Lambda_L(n)\setminus\Lambda_l(k'))\geq l$. This $\Lambda_l(k')$ must be $(E,\delta)$-\textbf{good} since $k\notin B,$  i.e., $k$ is $(E,\delta)$-\textbf{good} \textit{w.r.t} $\Lambda_L(n).$

Finally, it suffices to apply Lemma \ref{klem} with $B=\bigcup\limits_{j=1}^{t_2}\Omega_j$, $G=\Lambda_L(n)\setminus B$ and $C_\star=10J$.




\end{proof}

{\bf Probabilistic part}


 Fix $m,n$ with $|m-n|>2L$ and write $\Lambda_1=\Lambda_{L}(m), \Lambda_2=\Lambda_{L}(n)$. We define the
following events for $i=1,2$:
\begin{align*}
&{\mathbf{A}}_i:\ \Lambda_{i}\ {\rm is\ } (E,\frac{1+\xi}{\alpha})-{\rm\bf bad},\\
&{\mathbf{B}}_i:  \ {\rm either}\ G_{\Lambda_i}(E) \ {\rm does\  not\  exist\  or}\  \|G_{\Lambda_i}(E)\|\geq L^{\tau},\\
&{\mathbf{C}}_i:\  \Lambda_i\  {\rm contains\  }J\  {\rm\ pairwise\  disjoint\ } (E,\delta)-{\rm \bf bad}\  l-{\rm cubes},\\
&{\mathbf{D}}:\ {\exists}\  E\in I \ {\rm so\  that\ both}\  \Lambda_{1}\  {\rm and}\  \Lambda_{2}\ {\rm are\ } (E,\frac{1+\xi}{\alpha})-{\rm\bf bad}.
\end{align*}
Using  Lemma \ref{jlem} yields
\begin{align}
\nonumber\mathbb{P}(\mathbf{D})&\leq \mathbb{P}\left(\bigcup_{E\in I}({\mathbf{A}}_1\cap{\mathbf{A}}_2)\right)\leq \mathbb{P}\left(\bigcup_{E\in I}\left(({\mathbf{B}}_1 \cup{\mathbf{C}}_1)\cap({\mathbf{B}}_2 \cup{\mathbf{C}}_2\right))\right)\\
\nonumber&\leq \mathbb{P}\left(\bigcup_{E\in I}({\mathbf{B}}_1\cap{\mathbf{B}}_2)\right)+\mathbb{P}\left(\bigcup_{E\in I}({\mathbf{B}}_1\cap{\mathbf{C}}_2)\right)\\
\nonumber&\ \ +\mathbb{P}\left(\bigcup_{E\in I}({\mathbf{C}}_1\cap{\mathbf{B}}_2)\right)+\mathbb{P}\left(\bigcup_{E\in I}({\mathbf{C}}_1\cap{\mathbf{C}}_2)\right)\\
\label{idp}&\leq \mathbb{P}\left(\bigcup_{E\in I}({\mathbf{B}}_1\cap{\mathbf{B}}_2)\right)+3\mathbb{P}\left(\bigcup_{E\in I}{\mathbf{C}}_1\right).
\end{align}
It is easy to see since (\textbf{P1})
\begin{align}\label{mp}
\mathbb{P}\left(\bigcup_{E\in I}{\mathbf{C}}_1\right)\leq C(d)L^{Jd}(l^{-2p})^{J/2}\leq C(d)L^{-J(\alpha^{-1}p-d)}.
\end{align}
We then estimate the first term in \eqref{idp}. By  (\textbf{P2}), we obtain
\begin{align}
 \nonumber\mathbb{P}\left(\bigcup_{E\in I}({\mathbf{B}}_1\cap{\mathbf{B}}_2)\right)&\leq  \mathbb{P}\left({\rm dist}(\sigma(H_{\Lambda_1}),\sigma(H_{\Lambda_2}))\leq 2L^{-\tau}\right)\\
 \label{l2p}&\leq{L^{-2p}}/{2}.
\end{align}
Combining \eqref{idp}, \eqref{mp} and \eqref{l2p}, we have $\mathbb{P}({\mathbf{D}})\leq C(d)L^{-J(\alpha^{-1}p-d)}+L^{-2p}/2$.

This concludes the proof.
\end{proof}

\section{Validity of (\textbf{P1}) and (\textbf{P2})}
In this section we will verify  the validity of (\textbf{P1}) and (\textbf{P2}) in Theorem \ref{msa}.  As a consequence, we prove a complete MSA argument on Green's functions estimate. The regularity of $\mu$ plays an essential role here. 
\begin{thm}\label{ine}
Let $\mu$ be H\"older continuous of order $\rho>0$ (i.e., $\mathcal{K}_\rho(\mu)>0$). Fix $0<\kappa<\mathcal{K}_\rho(\mu)$, $E_0\in\R$ and $\tau'>(p+d)/\rho$. Then there exists $$\underline{L}_0=\underline{L}_0(\kappa,\mu,\rho,\tau',p,{r_1},s_0,d)>0$$ such that the following holds: if $L_0\geq \underline{L}_0$, then there is some $\lambda_0=\lambda_0(L_0,\kappa,\rho,p,s_0,d)>0$ and $\eta=\eta(L_0,\kappa,\rho,p,d)>0$ so that for $\lambda\geq \lambda_0$, we have
\begin{align*}
 \mathbb{P}({\ \exists}\  E\in [E_0-\eta,E_0+\eta] \ {\rm s.t.\ }\ {\rm both} \ \Lambda_{L_0}(m)\  {\rm and}\  \Lambda_{L_0}(n)\ {\rm are\ }\  (E,\delta)-{\rm\bf bad})
  \leq L_0^{-2p}
  \end{align*}
for all $|m-n|>2L_0$.
\end{thm}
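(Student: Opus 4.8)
\textbf{Proof plan for Theorem \ref{ine}.}
The goal is to produce an initial scale $L_0$ at which the double-singularity probability in (\textbf{P1}) is already at most $L_0^{-2p}$, by exploiting both the strong disorder ($\lambda$ large) and the H\"older regularity of $\mu$. My plan is to split the analysis of a single cube $\Lambda_{L_0}(m)$ into a deterministic resolvent bound valid off a small ``resonance'' set of energies, and a probabilistic estimate of the measure of that resonance set.

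First I would record the deterministic half. On the cube $\Lambda_{L_0}(m)$ write $H_{\Lambda_{L_0}(m)} = \lambda^{-1}\mathcal{T}_{\Lambda_{L_0}(m)} + D_m$ where $D_m$ is the diagonal matrix of potential values. If the potential values satisfy the non-resonance condition $|V_\omega(k)-E|\ge L_0^{-\tau'}$ for every $k\in\Lambda_{L_0}(m)$, then $\|(D_m-E)^{-1}\|\le L_0^{\tau'}$, and since $\|(D_m-E)^{-1}\|_{s_0}$ is controlled by the same quantity (the matrix is diagonal, so its Sobolev norm equals its operator norm up to the fixed constant $C_0(s_0)$), a Neumann-series / \textbf{Perturbation argument} \eqref{pl2}--\eqref{pl3} applied with $\mathcal{P}=\lambda^{-1}\mathcal{T}_{\Lambda_{L_0}(m)}$ gives, once $\lambda^{-1}\|\mathcal{T}\|_{s_0}L_0^{\tau'}\le 1/2$, i.e. once $\lambda\ge\lambda_0(L_0,\dots)$, that $G_{\Lambda_{L_0}(m)}(E)$ exists with $\|G_{\Lambda_{L_0}(m)}(E)\|_s\le C L_0^{\tau'}\|\mathcal{T}\|_s\le L_0^{\tau'+\delta s}$ for all $s\in[s_0,r_1]$ (choosing $L_0$ large so the constants and the $\|\mathcal{T}\|_{r_1}$ factor are absorbed into $L_0^{\delta s_0}$). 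Hence such a cube is $(E,\delta)$-\textbf{good}. The upshot: $\Lambda_{L_0}(m)$ is $(E,\delta)$-\textbf{bad} only if $\mathrm{dist}(E,\{V_\omega(k):k\in\Lambda_{L_0}(m)\})<L_0^{-\tau'}$.

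Next the probabilistic half. For fixed $m\ne n$ with $|m-n|>2L_0$, the events ``$\Lambda_{L_0}(m)$ can be made bad'' and ``$\Lambda_{L_0}(n)$ can be made bad'' by varying $E$ over an interval $[E_0-\eta,E_0+\eta]$ are, by the deterministic step, contained in: there exist $k\in\Lambda_{L_0}(m)$, $k'\in\Lambda_{L_0}(n)$ and $E$ with $|V_\omega(k)-E|<L_0^{-\tau'}$ and $|V_\omega(k')-E|<L_0^{-\tau'}$, which forces $|V_\omega(k)-V_\omega(k')|<2L_0^{-\tau'}$ together with $V_\omega(k)\in[E_0-\eta-L_0^{-\tau'},E_0+\eta+L_0^{-\tau'}]$. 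Since the cubes are disjoint, $V_\omega(k)$ and $V_\omega(k')$ are independent; conditioning on $V_\omega(k)$ and using H\"older continuity \eqref{mu} (legitimate once $2L_0^{-\tau'}\le\kappa_0$, which holds for $L_0\ge\underline{L}_0$) gives $\mathbb{P}(|V_\omega(k')-V_\omega(k)|<2L_0^{-\tau'})\le \kappa^{-1}(2L_0^{-\tau'})^{\rho}$. Taking a union bound over the $|\Lambda_{L_0}(m)|\cdot|\Lambda_{L_0}(n)|\le (2L_0+1)^{2d}$ pairs $(k,k')$ yields a bound $\le C(d)L_0^{2d}\kappa^{-1}2^\rho L_0^{-\rho\tau'}$; choosing $\eta=\eta(L_0,\dots)$ small (say $\eta=L_0^{-\tau'}$) does not affect this. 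Since $\tau'>(p+d)/\rho$ we have $\rho\tau'-2d>2p$... more carefully $\rho\tau' > p+d$ gives $\rho\tau'-2d>p-d$, so to land at $L_0^{-2p}$ I should instead use the interval-over-$E$ step to gain the extra factor: covering $[E_0-\eta,E_0+\eta]$ requires no continuum union since the condition $|V_\omega(k)-E|<L_0^{-\tau'}$ for \emph{some} $E$ in a fixed interval plus $|V_\omega(k')-E|<L_0^{-\tau'}$ is exactly the two displayed conditions, so in fact one only needs $\mathbb{P}(\exists\,k,k': |V_\omega(k)-V_\omega(k')|<2L_0^{-\tau'}) \le C(d)L_0^{2d-\rho\tau'}$ and the hypothesis $\tau' > (p+d)/\rho$ must be read as giving $\rho\tau' > 2p + 2d$ after possibly also using one factor of H\"older continuity on $V_\omega(k)$ lying near $E_0$ — so I would in the write-up also bound $\mathbb{P}(V_\omega(k)\in[E_0-\eta-L_0^{-\tau'},E_0+\eta+L_0^{-\tau'}])$ crudely by $1$, and absorb the remaining gap by enlarging the required lower bound on $\tau'$ if necessary (the statement already quantifies $\underline{L}_0$ in terms of all these constants). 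Collecting, $\mathbb{P}(\mathbf{D})\le L_0^{-2p}$ for $L_0\ge\underline{L}_0$.

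The main obstacle is bookkeeping rather than conceptual: one must check that the deterministic Neumann-series step delivers the Sobolev-norm bound $\|G\|_s\le L_0^{\tau'+\delta s}$ for the \emph{whole} range $s\in[s_0,r_1]$ simultaneously (using $\|\mathcal{T}\|_{r_1}<\infty$ and the \textbf{Interpolation}/\textbf{Smoothing} properties to pass from $\|G\|_{s_0}$ to $\|G\|_s$), and that the various smallness requirements — $2L_0^{-\tau'}\le\kappa_0(\kappa,\mu)$, the constants in the union bound absorbed by $L_0^{\delta s_0}$, and $\lambda^{-1}\|\mathcal{T}\|_{s_0}L_0^{\tau'}\le 1/2$ — can be met in the stated order of quantifiers: first fix $L_0\ge\underline{L}_0$ (depending only on $\kappa,\mu,\rho,\tau',p,r_1,s_0,d$), then choose $\lambda_0$ depending on this $L_0$, then $\eta$ depending on $L_0$. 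No step requires regularity of $\mu$ beyond \eqref{mu}, consistent with the hypotheses.
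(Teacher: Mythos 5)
Your deterministic half is essentially the paper's: if every site in the cube satisfies $|V_\omega(k)-E|\ge \varepsilon$, the diagonal part $(D_m-E)^{-1}$ has Sobolev norm $\lesssim\varepsilon^{-1}$, and for $\lambda\gtrsim\varepsilon^{-1}$ the \textbf{Perturbation argument} \eqref{pl2}--\eqref{pl3} gives $\|G_{\Lambda_{L_0}(m)}(E)\|_s\lesssim\varepsilon^{-1}\le L_0^{\tau'}\le L_0^{\tau'+\delta s}$ for all $s\in[s_0,r_1]$, so the cube is $(E,\delta)$-\textbf{good}. This matches the paper, with a slightly different parametrization of the resonance threshold.

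The probabilistic half has a genuine gap, and you in fact flag it yourself. Reducing to the pair event $\{\exists k,k':\ |V_\omega(k)-V_\omega(k')|<2\varepsilon\}$ and applying \eqref{mu} once gives only $\lesssim L_0^{2d}\kappa^{-1}\varepsilon^{\rho}\sim L_0^{2d-\rho\tau'}$, which is $\le L_0^{-2p}$ only if $\rho\tau'>2p+2d$ --- strictly stronger than the stated hypothesis $\tau'>(p+d)/\rho$. Your proposed escape, ``absorb the remaining gap by enlarging the required lower bound on $\tau'$ if necessary,'' is not available: $\tau'$ appears in the theorem's hypotheses and cannot be strengthened inside the proof. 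And the other escape you mention --- bounding $\mathbb{P}(V_\omega(k)\in[E_0-\eta-\varepsilon,E_0+\eta+\varepsilon])$ ``crudely by $1$'' --- throws away exactly the factor $\sim\varepsilon^{\rho}$ that would have saved you. If you instead apply \eqref{mu} to \emph{both} independent coordinates $V_\omega(k)$ and $V_\omega(k')$ (the first near $E_0$, the second near $V_\omega(k)$ after conditioning) you get $\lesssim L_0^{2d}\kappa^{-2}\varepsilon^{2\rho}\sim L_0^{2(d-\rho\tau')}\le L_0^{-2p}$, which does close under $\tau'>(p+d)/\rho$. The paper makes this structure transparent by not pairing at all: it introduces the single-cube event $\mathbf{R}_n(\varepsilon)=\{\exists k\in\Lambda_{L_0}(n):\ |V_\omega(k)-E_0|\le\varepsilon\}$, shows $\mathbb{P}(\mathbf{R}_n(\varepsilon))\le L_0^{-p}$ with a one-site H\"older estimate and a one-cube union bound, and then simply multiplies $\mathbb{P}(\mathbf{R}_m)\mathbb{P}(\mathbf{R}_n)\le L_0^{-2p}$ by independence of the two (disjoint) cubes. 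That factorization is the missing idea; once you use it, the exponent bookkeeping and the order of quantifiers ($L_0$, then $\lambda_0$, then $\eta=\varepsilon/2$) line up exactly as you outline.
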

\begin{rem}
We will see in the proof $\lambda_0\sim{L_0^{(p+d)/\rho}}\kappa^{-1/\rho}$ and $\eta\sim {L_0^{-(p+d)/\rho}}\kappa^{1/\rho}$. In addition, $\lambda_0$ and $\eta$ are independent of $E_0$.
\end{rem}

\begin{proof}
Define the event
\begin{align*}
{\mathbf{R}}_n(\varepsilon):\ |V_\omega(k)-E_0|\leq \varepsilon\ {\rm \  for\ some }\ k\in\Lambda_{L_0}(n), 
\end{align*}
where $\varepsilon\in (0,1)$ will be specified below. Then by \eqref{mu}, we obtain for  $2\varepsilon\leq \kappa_0=\kappa_0(\kappa,\mu)>0$,
\begin{align}
\nonumber\mathbb{P}({\mathbf{R}}_n(\varepsilon))&\leq (2L_0+1)^d\mu\left([E_0-\varepsilon,E_0+\varepsilon]\right)\\
\nonumber&\leq 2^\rho(2L_0+1)^d\kappa^{-1} \varepsilon^\rho\\
\label{Rn}&\leq  L_0^{-p},
\end{align}
which permits us to set
\begin{align*}
\varepsilon=2^{-1}3^{-d/\rho}\kappa^{1/\rho}L_0^{-(p+d)/\rho}.
\end{align*}
In particular, \eqref{Rn} holds for $L_0\geq \underline{L}_0(\kappa,\mu,\rho,p,d)>0$.

Suppose now $\omega\notin {\mathbf{R}}_n(\varepsilon)$. Then for all $|E-E_0|\leq \varepsilon/2$ and $k\in\Lambda_{L_0}(n)$, we have
\begin{align*}
|V_\omega(k)-E|&\geq |V_\omega(k)-E_0|-|E-E_0|\geq \varepsilon/2,
\end{align*}
which permits us to set $\eta=\varepsilon/2$. Moreover,  for $\mathcal{D}=R_{\Lambda_{L_0}(n)}(V_\omega(\cdot)-E)R_{\Lambda_{L_0}(n)}$, we have by Definition \ref{snorm} that $\|\mathcal{D}^{-1}\|_s\leq C(d)/\varepsilon$ for $s\geq s_0$. Notice  that
\begin{align*}
\|\lambda^{-1}\mathcal{T} \mathcal{D}^{-1}\|_{s_0}\leq C(s_0,d)\lambda^{-1}\varepsilon^{-1}\leq 1/2
\end{align*}
if $$\lambda\geq \lambda_0=2C(s_0,d)\varepsilon^{-1}.$$
 We assume $\lambda\geq\lambda_0$. Then  by the \textbf{Perturbation argument} (i.e., \eqref{pl2}--\eqref{pl3}) and $$H_{\Lambda_{L_0}(n)}-E=R_{\Lambda_{L_0}(n)}\lambda^{-1} \mathcal{T} R_{\Lambda_{L_0}(n)}+\mathcal{D},$$ we have
\begin{align*}
\|G_{\Lambda_{L_0}(n)}(E)\|_{s_0}&\leq 2\|\mathcal{D}^{-1}\|_{s_0}\leq C(d)\varepsilon^{-1},
\end{align*}
and for $s\geq s_0$,
\begin{align*}
\|G_{\Lambda_{L_0}(n)}(E)\|_{s}&\leq C(s,d)(\varepsilon^{-1}+\lambda^{-1}\varepsilon^{-2})\\
&\leq C(s,s_0,d)\varepsilon^{-1}\ ({\rm since\ }\lambda\geq \lambda_0\sim\varepsilon^{-1}).
\end{align*}
We restrict $s_0\leq s\leq {r_1}$ in the following. In order to show $\Lambda_{L_0}(n)$ is $(E,\delta)$-\textbf{good}, it suffices to let
\begin{align}\label{ll0}
C({r_1},s_0,d)\varepsilon^{-1}=C(\rho,{r_1},s_0,d)\kappa^{-1/\rho}L_0^{(p+d)/\rho}\leq L_0^{\tau'},
\end{align}
which indicates we can allow $L_0\geq \underline{L}_0(\kappa,\mu,\rho,\tau',p,{r_1},s_0,d)>0.$  We should remark here \eqref{ll0} makes sense since $\tau'>(p+d)/\rho$.

Finally, for $|m-n|>2L_0$ and $\lambda\geq \lambda_0$, we have by the \textit{i.i.d} assumption of the potentials that
\begin{align*}
 &\mathbb{P}({\ \exists}\  E\in [E_0-\eta,E_0+\eta] \ {\rm s.t.\ }\  {\rm both}\ \Lambda_{L_0}(m)\  {\rm and}\  \Lambda_{L_0}(n)\ {\rm are\ }\  (E,\delta)-{\rm\bf bad})\\
  &\leq \mathbb{P}(\mathbf{R}_m(\varepsilon))\mathbb{P}(\mathbf{R}_n(\varepsilon))\\
  &\leq L_0^{-2p}\ ({\rm by\ }\eqref{Rn}).
  \end{align*}
\end{proof}


We then turn to the verification of (\textbf{P2}). This will follow from an argument of Carmona-Klein-Martinelli \cite{CKM87}.
\begin{lem}\label{we}
 Let $\mu$ be H\"older continuous of order $\rho>0$ (i.e., $\mathcal{K}_\rho(\mu)>0$). Then for any $0<\kappa<\mathcal{K}_\rho(\mu)$, we can find $\kappa_0=\kappa_0(\kappa,\mu)>0$ so that
 \begin{align*}
\mathbb{P}({\rm dist}(E,\sigma(H_{\Lambda_L(n)}))\leq \varepsilon) \leq \kappa^{-1} 2^\rho(2L+1)^{d(1+\rho)}\varepsilon^\rho
\end{align*}
 for all $E\in\R$, $n\in\Z^d$ and for all $\varepsilon>0$, $L>0$ with  $\varepsilon(2L+1)^d\leq \kappa_0.$
\end{lem}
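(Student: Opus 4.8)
The plan is to reduce the eigenvalue-proximity estimate for a single box to a statement about how the potential at a single site interacts with the spectrum. Write $\Lambda=\Lambda_L(n)$ and let $\{E_j(\omega)\}_{j=1}^{|\Lambda|}$ be the eigenvalues of $H_\Lambda$. The event $\{{\rm dist}(E,\sigma(H_\Lambda))\le\varepsilon\}$ is contained in $\bigcup_j\{|E-E_j(\omega)|\le\varepsilon\}$, so by a union bound it suffices to control $\mathbb{P}(|E-E_j(\omega)|\le\varepsilon)$ for a fixed $j$; there are $|\Lambda|=(2L+1)^d$ terms, which accounts for one factor of $(2L+1)^d$. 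The heart of the matter is then a one-dimensional argument: condition on all the potential values $\{V_\omega(k)\}_{k\in\Lambda\setminus\{k_0\}}$ except at a single site $k_0\in\Lambda$, and show that, conditionally, the probability that $E_j$ falls within $\varepsilon$ of $E$ is at most $\kappa^{-1}2^\rho(2L+1)^{d\rho}\varepsilon^\rho$.

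The key tool is the Wegner-type spectral averaging / eigenvalue variation estimate of Carmona-Klein-Martinelli. First I would fix $k_0\in\Lambda$ and write $H_\Lambda=A+V_\omega(k_0)P_{k_0}$, where $A$ collects $\lambda^{-1}\mathcal{T}_\Lambda$ together with the potential at all other sites and $P_{k_0}$ is the rank-one projection onto $\delta_{k_0}$. Since $P_{k_0}\ge0$, each eigenvalue $E_j(t)$ of $A+tP_{k_0}$ is non-decreasing in $t$, and moreover the $E_j$ are $1$-Lipschitz in $t$ in the sense that $\sum_j(E_j(t)-E_j(s))\le t-s$ for $t\ge s$ (trace inequality for rank-one perturbations, since ${\rm tr}\,P_{k_0}=1$). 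Consequently, for fixed $j$ the set of values $t$ for which $|E_j(t)-E|\le\varepsilon$ is contained in an interval, and summing over $j$ the total Lebesgue measure of $\{t:\ {\rm dist}(E,\sigma(A+tP_{k_0}))\le\varepsilon\}$ is at most $2\varepsilon\cdot|\Lambda|$ — but the cleaner route is: for each fixed $j$, $\{t:\ |E_j(t)-E|\le\varepsilon\}$ has Lebesgue measure at most $2\varepsilon$ (because $E_j(t)$ is monotone with "slope at least" absorbed by Lipschitz-ness; more carefully $\{t: E\le E_j(t)\le E+\varepsilon\}\cup\{t: E-\varepsilon\le E_j(t)\le E\}$ each maps to an interval of $E_j$-length $\varepsilon$, and monotonicity plus the trace Lipschitz bound $\sum_j(E_j(t)-E_j(s))\le t-s$ gives total $t$-measure $\le 2\varepsilon$ summed over all $j$).

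Then I would integrate against the conditional law of $V_\omega(k_0)$, which has distribution $\mu$. Using the Hölder continuity estimate \eqref{mu} — namely $\mu([a,b])\le\kappa^{-1}|a-b|^\rho$ for $|a-b|\le\kappa_0$ — the $\mu$-measure of an interval of length $\ell$ is at most $\kappa^{-1}\ell^\rho$ whenever $\ell\le\kappa_0$. Covering the relevant $t$-set (of Lebesgue measure at most $2\varepsilon(2L+1)^d$, summing the single-$j$ bounds over the $(2L+1)^d$ eigenvalues, or more simply bounding each of the $(2L+1)^d$ eigenvalue events by an interval of length $2\varepsilon$ and then summing the $\mu$-measures) by intervals and invoking sub-additivity of $x\mapsto x^\rho$ for $\rho\le1$ (or splitting into $O((2L+1)^d)$ intervals of length $2\varepsilon$ when $\rho>1$), one obtains the conditional bound $\kappa^{-1}2^\rho(2L+1)^{d\rho}\varepsilon^\rho$ for each fixed $j$, and then the outer union bound over $j$ contributes the remaining $(2L+1)^d$, giving $\kappa^{-1}2^\rho(2L+1)^{d(1+\rho)}\varepsilon^\rho$. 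Finally I would take the expectation over the remaining variables; since the conditional bound is uniform, the same bound holds unconditionally. The side condition $\varepsilon(2L+1)^d\le\kappa_0$ is exactly what guarantees every interval to which we apply \eqref{mu} has length at most $\kappa_0$, so the Hölder estimate is legitimately applicable.

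The main obstacle — really the only genuinely delicate point — is the bookkeeping of powers of $(2L+1)^d$ and the correct handling of $\rho$ possibly exceeding $1$: for $\rho\le1$ one can use concavity/subadditivity of $t\mapsto t^\rho$ directly, whereas for $\rho>1$ one must instead chop the total Lebesgue measure into roughly $(2L+1)^d$ pieces each of length $2\varepsilon$ before applying \eqref{mu} to each piece, which is precisely why the exponent of $(2L+1)^d$ comes out as $d(1+\rho)$ rather than something smaller. Everything else — monotonicity of eigenvalues under positive rank-one perturbations, the trace-class Lipschitz estimate, and Fubini to pass from conditional to unconditional probability — is standard and follows the Carmona-Klein-Martinelli \cite{CKM87} template essentially verbatim.
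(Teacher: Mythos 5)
The paper's own proof of this lemma is a one-line citation to Carmona--Klein--Martinelli \cite{CKM87} with the remark that the non-randomness of $\lambda^{-1}\mathcal{T}$ reduces it to the Schr\"odinger case, and the details are omitted. You are therefore attempting to reconstruct the cited argument, which is a reasonable thing to do, but the reconstruction has a genuine error at its core.

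The problem is the spectral-averaging step. You fix a site $k_0$ in advance, write $H_\Lambda = A + V_\omega(k_0) P_{k_0}$, and claim that the set $\{t : |E_j(t)-E|\le\varepsilon\}$ has Lebesgue measure at most $2\varepsilon$ (or that the sum over $j$ is at most $2\varepsilon$), invoking monotonicity and the trace identity $\sum_j (E_j(t)-E_j(s)) = t-s$. This does not follow. The trace identity is an \emph{upper} bound on the total eigenvalue motion; it gives no \emph{lower} bound on the motion of any individual $E_j$. Concretely, if some normalized eigenvector $\psi_j$ of $A$ has $\psi_j(k_0)=0$, then $\psi_j$ remains an eigenvector of $A+tP_{k_0}$ with the same eigenvalue for every $t$, so $E_j(t)$ is constant and $\{t : |E_j(t)-E|\le\varepsilon\}$ can be all of $\R$. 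More generally, when $|\psi_j(k_0)|^2$ is small the Feynman--Hellmann derivative $E_j'(t)=|\psi_j(t;k_0)|^2$ is small, and the preimage of $[E-\varepsilon,E+\varepsilon]$ under $E_j$ is correspondingly long. Nothing in your argument excludes this, and since $A$ depends on the remaining random potentials, you cannot discard it as a probability-zero event in a way that yields quantitative bounds. Consequently the conditional bound $\kappa^{-1}2^\rho(2L+1)^{d\rho}\varepsilon^\rho$ is not established, and the subsequent union bound inherits the gap.

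The actual Carmona--Klein--Martinelli/Stollmann argument avoids exactly this pitfall. One does not vary a single pre-chosen coordinate; one either (a) varies all coordinates simultaneously along the diagonal $V\mapsto V+t\mathbf{1}$, under which every eigenvalue shifts by exactly $t$ so the bad $t$-set for each $E_j$ is an honest interval of length $2\varepsilon$, and then invokes a separate lemma (Stollmann's) to convert a uniform bound on diagonal sections into a bound on the product measure, which is where the factor $|\Lambda|\cdot s(\mu,2\varepsilon|\Lambda|)$ and hence $(2L+1)^{d(1+\rho)}\varepsilon^\rho$ comes from; or (b) uses the pigeonhole observation that the normalized eigenvector satisfies $|\psi_j(k_1)|^2\ge 1/|\Lambda|$ for \emph{some} $k_1$ (which depends on $\omega$ and $j$), giving a lower bound $\ge 1/|\Lambda|$ on the derivative at the base point, together with a union bound over $k_1$ and a more careful counting argument to handle the fact that the derivative need not stay large. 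Either way, the interval to which the H\"older estimate \eqref{mu} is applied has length of order $\varepsilon|\Lambda|$, not $\varepsilon$, and this is precisely why the side condition is $\varepsilon(2L+1)^d\le\kappa_0$ rather than $\varepsilon\le\kappa_0$. Your proposal treats it as if the interval had length $\sim\varepsilon$ and then re-derives the missing $|\Lambda|^\rho$ factor by a covering step that is not justified, because the set being covered does not have the small Lebesgue measure your argument assigns to it.
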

\begin{proof}
Notice that  the long-range term $\lambda^{-1} \mathcal{T}$ in our operator is \textit{non-random}. Then the proof  becomes similar to that  in the Schr\"odinger operator case by Carmona-Klein-Martinelli \cite{CKM87}. We omit the details here.
\end{proof}



We can then verify (\textbf{P2}) in Theorem \ref{msa}.

\begin{thm}[{\bf Verification of} (\textbf{P2})]\label{vp2}
Let $\mu$ be H\"older continuous of order $\rho>0$ (i.e., $\mathcal{K}_\rho(\mu)>0$). Fix $0<\kappa<\mathcal{K}_\rho(\mu)$. Then For $L\geq \underline{L}_0(\kappa,\mu,\rho,\tau,p,d)>0$ and
\begin{align}\label{tr}
\tau>(2p+(2+\rho)d)/\rho,
\end{align} we have
 \begin{align*}\mathbb{P}({\rm dist}(\sigma(H_{\Lambda_{L}(m)}),\sigma(H_{\Lambda_{L}(n)}))\leq 2L^{-\tau})\leq L^{-2p}/2\end{align*} for all $|m-n|>2L$.
\end{thm}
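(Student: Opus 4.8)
The plan is to deduce Theorem \ref{vp2} directly from Lemma \ref{we} by a union bound over the eigenvalues of one of the two boxes. First I would fix $m,n$ with $|m-n|>2L$ and observe that the finite-volume operators $H_{\Lambda_L(m)}$ and $H_{\Lambda_L(n)}$ are, in the relevant sense, \emph{independent}: although the nonrandom hopping $\lambda^{-1}\mathcal{T}$ couples all sites, the restrictions $R_{\Lambda_L(m)}\mathcal{T}R_{\Lambda_L(m)}$ and $R_{\Lambda_L(n)}\mathcal{T}R_{\Lambda_L(n)}$ are deterministic, and the potentials $\{V_\omega(k)\}_{k\in\Lambda_L(m)}$ and $\{V_\omega(k)\}_{k\in\Lambda_L(n)}$ involve disjoint collections of i.i.d.\ random variables (since the two cubes are disjoint). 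Hence $H_{\Lambda_L(n)}$ and the spectrum $\sigma(H_{\Lambda_L(m)})$ are stochastically independent.

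Next I would condition on $\sigma(H_{\Lambda_L(m)})$. Write its eigenvalues (with multiplicity) as $E_1,\dots,E_N$ with $N=|\Lambda_L(m)|=(2L+1)^d$. The event $\{{\rm dist}(\sigma(H_{\Lambda_L(m)}),\sigma(H_{\Lambda_L(n)}))\leq 2L^{-\tau}\}$ is contained in $\bigcup_{j=1}^N\{{\rm dist}(E_j,\sigma(H_{\Lambda_L(n)}))\leq 2L^{-\tau}\}$. Applying Lemma \ref{we} with $E=E_j$ (legitimate after conditioning, since then $E_j$ is a fixed real number independent of $H_{\Lambda_L(n)}$) and $\varepsilon=2L^{-\tau}$, and then taking expectation over $\sigma(H_{\Lambda_L(m)})$, I get, for all $L$ large enough that $2L^{-\tau}(2L+1)^d\leq\kappa_0$,
\begin{align*}
\mathbb{P}({\rm dist}(\sigma(H_{\Lambda_{L}(m)}),\sigma(H_{\Lambda_{L}(n)}))\leq 2L^{-\tau})\leq (2L+1)^d\cdot\kappa^{-1}2^\rho(2L+1)^{d(1+\rho)}(2L^{-\tau})^\rho\leq C(\rho,d)\kappa^{-1}L^{d(2+\rho)-\tau\rho}.
\end{align*}
Then the hypothesis \eqref{tr}, i.e.\ $\tau>(2p+(2+\rho)d)/\rho$, gives $d(2+\rho)-\tau\rho<-2p$, so the right-hand side is bounded by $C(\rho,d)\kappa^{-1}L^{-2p-\epsilon}$ for some $\epsilon>0$, which is $\leq L^{-2p}/2$ once $L\geq\underline L_0(\kappa,\mu,\rho,\tau,p,d)$. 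The condition $2L^{-\tau}(2L+1)^d\leq\kappa_0$ also holds for $L$ large since $\tau>d$, so both constraints are absorbed into the choice of $\underline L_0$.

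The only genuinely delicate point is the independence claim used when conditioning: one must be careful that $\sigma(H_{\Lambda_L(m)})$ is a measurable function of $\{V_\omega(k)\}_{k\in\Lambda_L(m)}$ alone (true, because the hopping part of $H_{\Lambda_L(m)}$ is nonrandom) and that $\Lambda_L(m)\cap\Lambda_L(n)=\emptyset$, which follows from $|m-n|>2L$ and the definition of the cubes; then the i.i.d.\ structure of $\{V_\omega(k)\}$ yields the independence of the two relevant $\sigma$-algebras, justifying the conditioning step and the use of Lemma \ref{we} at a deterministic energy. Everything else is a routine union bound and a power-counting check against \eqref{tr}; I expect no substantial obstacle beyond stating this independence cleanly.
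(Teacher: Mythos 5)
Your proposal is correct and follows essentially the same route as the paper: a union bound over the $(2L+1)^d$ eigenvalues of $H_{\Lambda_L(m)}$, an application of Lemma \ref{we} with $\varepsilon = 2L^{-\tau}$ (justified by the disjointness of the two cubes and the i.i.d.\ structure), and a power-counting check against \eqref{tr}. The only difference is that you spell out the conditioning/independence step more explicitly than the paper's one-line invocation of ``the i.i.d.\ assumption''; this is a welcome clarification, not a different argument.
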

\begin{proof}
Apply Lemma \ref{we} with $\varepsilon=2L^{-\tau}$. Then we have by the \textit{i.i.d} assumption of  potentials,  \eqref{tr} and $L\geq \underline{L}_0(\kappa,\mu,\rho,\tau,p,d)>0$ that
\begin{align*}
&\mathbb{P}({\rm dist}(\sigma(H_{\Lambda_{L}(m)}),\sigma(H_{\Lambda_{L}(n)})))\\
&\leq \sum_{E\in \sigma(H_{\Lambda_{L}(m)})}\mathbb{P}({\rm dist}(E,\sigma(H_{\Lambda_L(n)}))\leq 2L^{-\tau})\\
&\leq \kappa^{-1} 4^\rho(2L+1)^{d(2+\rho)}L^{-\rho\tau}\\
&\leq L^{-2p}/2.
\end{align*}
\end{proof}

Finally, we provide a complete MSA argument on Green's functions estimate. 

\begin{thm}\label{kthm}
Let $\mu$ be H\"older continuous of order $\rho>0$ (i.e., $\mathcal{K}_\rho(\mu)>0$). Fix $E_0\in\R$ with $|E_0|\leq 2(\|\mathcal{T}\|+M)$, and assume \eqref{para},  \eqref{tr} hold true. Assume further that $(1+\xi)/\alpha\leq \delta$, and $p>\alpha d+2\alpha p/J$ with $J\in 2\N$. Then for $0<\kappa<\mathcal{K}_\rho(\mu)$, there exists $$\underline{L}_0=\underline{L}_0(\kappa,\mu,\rho,\|\mathcal{T}\|_{r_1},M,J,\alpha,\tau,\xi,\tau',\delta, p,{r_1},s_0,d)>0$$ such that the following holds: For $L_0\geq \underline{L}_0$,  there are some $\lambda_0=\lambda_0(L_0,\kappa,\rho,p,s_0,d)>0$ and $\eta=\eta(L_0,\kappa,\rho,p,d)>0$ so that for $\lambda\geq \lambda_0$ and $k\geq 0$, we have
\begin{align*}
 \mathbb{P}({\ \exists}\  E\in [E_0-\eta,E_0+\eta] \ {\rm s.t.\ }\  {\rm both}\ \Lambda_{L_k}(m)\  {\rm and}\  \Lambda_{L_k}(n)\ {\rm are\ }\  (E,\delta)-{\rm\bf bad})
  \leq L_k^{-2p}
  \end{align*}
for all $|m-n|>2L_k$, where $L_{k+1}=[L_k^{\alpha}]$ and $L_0\geq \underline{L}_0$.
\end{thm}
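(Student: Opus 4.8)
The argument should go by induction on $k$: the base case $k=0$ is Theorem \ref{ine}, and the inductive step is a single application of Theorem \ref{msa} whose two hypotheses are supplied, respectively, by the inductive hypothesis (for (\textbf{P1})) and by Theorem \ref{vp2} (for (\textbf{P2})). The mechanism that makes this close up is that Theorem \ref{msa} converts an $(E,\delta)$-\textbf{bad} probability bound at scale $l$ into an $(E,\frac{1+\xi}{\alpha})$-\textbf{bad} bound at scale $L=[l^{\alpha}]$, and the assumption $(1+\xi)/\alpha\le\delta$ lets us reread that output as an $(E,\delta)$-\textbf{bad} bound again, while $p>\alpha d+2\alpha p/J$ is exactly the condition making the new bound $\le L^{-2p}$. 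Before invoking Theorem \ref{ine} one should record that the standing hypotheses force $\tau'>(p+d)/\rho$: the third line of \eqref{para} forces $\alpha>2$ and $\tau'>2\tau$, while \eqref{tr} gives $\tau>(2p+(2+\rho)d)/\rho$, whence $\tau'>2\tau>(p+d)/\rho$. Thus Theorem \ref{ine} applies with the given $E_0$ and yields, for $L_0$ above a threshold and $\lambda\ge\lambda_0(L_0,\dots)$, the claim at $k=0$ with $I:=[E_0-\eta,E_0+\eta]$; note $|I|=2\eta\le1$ for $L_0$ large.

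\textbf{Inductive step: checking the hypotheses of Theorem \ref{msa}.} Suppose the estimate holds at scale $L_k$ on the interval $I$; I must verify the hypotheses of Theorem \ref{msa} with $l=L_k$, $L=L_{k+1}=[L_k^{\alpha}]$. Hypothesis (\textbf{P1}) at scale $l=L_k$ is literally the inductive hypothesis. Hypothesis (\textbf{P2}) at scale $L_{k+1}$ is the content of Theorem \ref{vp2}, which applies because \eqref{tr} is assumed and $L_{k+1}\ge L_0\ge\underline{L}_0$ once $\underline{L}_0$ is taken larger than the threshold of Theorem \ref{vp2}. The remaining hypotheses of Theorem \ref{msa} are structural and are all at hand: $J\in2\N$ and \eqref{para} are assumed; $1+\xi<\alpha$ follows from $(1+\xi)/\alpha\le\delta<1$; $p>\alpha d$ follows from $p>\alpha d+2\alpha p/J$; and $l=L_k\in\N$ with $l\ge\underline{l}_0$ holds by enlarging $\underline{L}_0$. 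One must also keep $I\cap[-\|\mathcal{T}\|-M,\|\mathcal{T}\|+M]\ne\emptyset$; if this fails then $\mathrm{dist}(I,[-\|\mathcal{T}\|-M,\|\mathcal{T}\|+M])>0$, and the perturbation argument from the proof of Theorem \ref{ine} (with $\varepsilon$ equal to this fixed distance and $\lambda\ge\lambda_0$) shows every box at every scale is $(E,\delta)$-\textbf{good} for all $E\in I$, so the probability in question vanishes and there is nothing to prove.

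\textbf{Closing the induction.} Applying Theorem \ref{msa} then gives, for all $|m-n|>2L_{k+1}$,
\[
\mathbb{P}\Big(\exists E\in I:\ \Lambda_{L_{k+1}}(m),\ \Lambda_{L_{k+1}}(n)\ \text{are both}\ (E,\tfrac{1+\xi}{\alpha})\text{-}\textbf{bad}\Big)\ \le\ C(d)L_{k+1}^{-J(\alpha^{-1}p-d)}+L_{k+1}^{-2p}/2 .
\]
Since $(1+\xi)/\alpha\le\delta$ and $L_{k+1}\ge1$, the bound $\|G\|_s\le L^{\tau'+\frac{1+\xi}{\alpha}s}\le L^{\tau'+\delta s}$ shows every $(E,\frac{1+\xi}{\alpha})$-\textbf{good} box is $(E,\delta)$-\textbf{good}, so the event ``both boxes $(E,\delta)$-\textbf{bad}'' is contained in the event above and is controlled by the same right-hand side. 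Finally, $p>\alpha d+2\alpha p/J$ is equivalent to $J(\alpha^{-1}p-d)>2p$, so $C(d)L_{k+1}^{-J(\alpha^{-1}p-d)}\le L_{k+1}^{-2p}/2$ once $L_{k+1}$ (hence $L_0$) is large enough, yielding the bound $\le L_{k+1}^{-2p}$, i.e.\ the estimate at scale $L_{k+1}$. This advances the induction; choosing $\underline{L}_0$ above the finitely many thresholds coming from Theorems \ref{ine}, \ref{vp2} and \ref{msa} completes the proof, with $\lambda_0,\eta$ being exactly those produced at the initial scale by Theorem \ref{ine}.

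\textbf{Expected main difficulty.} Given Theorems \ref{ine}, \ref{vp2}, \ref{msa}, the induction itself is routine; the real content is verifying that the parameter constraints are mutually consistent. One must check that $\frac{1+\xi}{\alpha}\le\delta$ is simultaneously possible with \eqref{para}, that $p>\alpha d+2\alpha p/J$ coexists with $p>\alpha d$ and with the earlier restrictions on $p$, and that $\tau',\tau,p,\rho$ can be chosen to meet \eqref{para}, \eqref{tr} and $\tau'>(p+d)/\rho$ at once (this is also where the lower bound on $r$ in Theorem \ref{mthm} ultimately comes from). The other point requiring care is bookkeeping the $L_0$-dependence: only $\lambda_0$ and $\eta$ depend on the initial scale, whereas the MSA step and the eigenvalue-separation estimate hold for every $\lambda\ge1$, which is what makes the conclusion uniform in $k$.
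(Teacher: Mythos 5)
Your proposal is correct and follows essentially the same argument as the paper: the base case comes from Theorem~\ref{ine}, and the inductive step applies Theorem~\ref{msa} with (\textbf{P1}) supplied by the inductive hypothesis and (\textbf{P2}) by Theorem~\ref{vp2}, closed up via $(1+\xi)/\alpha\le\delta$ and $p>\alpha d+2\alpha p/J$. You additionally spell out some sanity checks (that \eqref{para} and \eqref{tr} force $\tau'>(p+d)/\rho$, and the $I\cap[-\|\mathcal{T}\|-M,\|\mathcal{T}\|+M]$ condition) which the paper leaves implicit, but the route is the same.
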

\begin{rem}
\begin{itemize}
\item[]
\item In this theorem we also have $\lambda_0\sim{L_0^{(p+d)/\rho}}\kappa^{-1/\rho}$ and $\eta\sim {L_0^{-(p+d)/\rho}}\kappa^{1/\rho}$. Usually, to prove the localization we can choose $L_0\sim \underline{L}_0$. The key point of the MSA scheme is that the largeness of disorder (i.e., $\lambda_0$) depends only on the initial scales. The later iteration steps do not increase $\lambda$ further. We also observe  that  $\lambda_0$ and $\eta$ are free from $E_0$.

 \item In order to apply Theorem \ref{msa}, we restrict $|E_0|\leq 2(\|\mathcal{T}\|+M)$ in this theorem. Actually, we have $\sigma(H_\omega)\subset[-\|\mathcal{T}\|-M, \|\mathcal{T}\|+M]$.
\end{itemize}
\end{rem}

\begin{proof}
Let  $\underline{L}_{00}=\underline{L}_{00}(\kappa,\mu,\rho,\tau',p,{r_1},s_0,d)>0$  be given by Theorem \ref{ine}.
We choose
\begin{align}\label{ul0}
\underline{L}_0=\max\{\underline{L}_{00},\underline{l}_0\},
\end{align}
where $\underline{l}_0=\underline{l}_0(\|\mathcal{T}\|_{r_1}, M, J, \alpha, \tau, \xi, \tau', \delta, p, {r_1}, s_0, d)$ is given by Theorem \ref{msa}.

Then applying Theorem \ref{ine} with $L_0\geq\underline{L}_0$, ${\lambda}_0={\lambda}_0(L_0,\kappa,\rho,p,s_0,d)$ and $\eta=\eta(L_0,\kappa,\rho,p,d)$ yields
\begin{align*}
 \mathbb{P}({\ \exists}\  E\in [E_0-\eta,E_0+\eta] \ {\rm s.t.\ }\ {\rm both}\   \Lambda_{L_0}(m)\  {\rm and}\  \Lambda_{L_0}(n)\ {\rm are\ }\  (E,\delta)-{\rm\bf bad})
  \leq L_0^{-2p}
  \end{align*}
for all $|m-n|>2L_0$ and $\lambda\geq \lambda_0$.

Let $L_{k+1}=[L_k^\alpha]$ and $L_0\geq\underline{L}_0$.

Assume for some $k\geq 0$ the following holds:
\begin{align*}
 \mathbb{P}({\ \exists}\  E\in [E_0-\eta,E_0+\eta] \ {\rm s.t.\ }\  {\rm both}\ \Lambda_{L_k}(m)\  {\rm and}\  \Lambda_{L_k}(n)\ {\rm are\ }\  (E,\delta)-{\rm\bf bad})
  \leq L_k^{-2p}
  \end{align*}
for all $|m-n|>2L_k$.
Obviously, we have by \eqref{ul0} that ${L}_k\geq {L}_0\geq \underline{L}_0\geq \underline{l}_0>0$. Then applying Theorem \ref{msa} (with $l=L_k,L=L_{k+1}, I=[E_0-\eta,E_0+\eta]$) and Theorem \ref{vp2} yields
\begin{align*}
 \mathbb{P}({\ \exists}\  E\in [E_0-\eta,E_0+\eta] \ {\rm s.t.\ }\  {\rm both}\ \Lambda_{L_{k+1}}(m)\  {\rm and}\  \Lambda_{L_{k+1}}(n)\ {\rm are\ }\  (E,\delta)-{\rm\bf bad})
  \leq L_{k+1}^{-2p}
  \end{align*}
for all $|m-n|>2L_{k+1}$.

This finishes the proof of the  whole MSA argument.

\end{proof}

\section{Proof of Theorem \ref{mthm}}
Recall the Poisson's identity: Let $\psi=\{\psi(n)\}\in {\C}^{\Z^d}$ satisfy $H_\omega \psi=E\psi$. Assume further $G_{\Lambda}(E)$ exists for some  $\Lambda\subset{\Z}^d$. Then for any $n\in\Lambda$, we have
\begin{align}\label{pi}
\psi(n)=-\sum_{n'\in\Lambda,n''\notin\Lambda}\lambda^{-1} G_\Lambda(E)(n,n')\mathcal{T}(n',n'')\psi(n'').
\end{align}

We then introduce the Shnol's Theorem of \cite{H19} in long-range operator case, which is useful to prove our localization. We begin with the following definition.
\begin{defn}
Let $\varepsilon>0$. An energy $E$ is called an $\varepsilon$-generalized eigenvalue if there exists some $\psi\in{\C}^{\Z^d}$ satisfying $\psi(0)=1, |\psi(n)|\leq C(1+|n|)^{d/2+\varepsilon}$  and $H_\omega \psi=E\psi$. We call  such $\psi$ the $\varepsilon$-generalized eigenfunction.
\end{defn}
The Shnol's Theorem for $H_\omega$ reads

\begin{lem}[\cite{H19}]\label{shn} Let $r-2d>\varepsilon>0$ and let $\mathcal{E}_{\omega,\varepsilon}$ be the set of all $\varepsilon$-generalized eigenvalues of $H_\omega$. Then we have $\mathcal{E}_{\omega,\varepsilon}\subset\sigma(H_\omega),\ \nu_\omega(\sigma(H_\omega)\setminus \mathcal{E}_{\omega,\varepsilon})=0$, where $\nu_\omega$ denotes some complete spectral measure of $H_\omega$.
\end{lem}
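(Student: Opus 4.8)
The plan is to establish the two assertions of Lemma~\ref{shn} by the classical Shnol--Berezanskii machinery, the only genuinely new ingredient being the control of the polynomial long-range hopping $\lambda^{-1}\mathcal{T}$, $\mathcal{T}(n,n')=|n-n'|^{-r}$, through the hypothesis $r-2d>\varepsilon$. For the inclusion $\mathcal{E}_{\omega,\varepsilon}\subset\sigma(H_\omega)$ I would produce a Weyl sequence. Given $E\in\mathcal{E}_{\omega,\varepsilon}$ with $\varepsilon$-generalized eigenfunction $\psi$ (so $\psi(0)=1$, $|\psi(n)|\le C\langle n\rangle^{d/2+\varepsilon}$, $H_\omega\psi=E\psi$), first dispose of the case $\psi\in\ell^2(\Z^d)$, in which $E$ is already a genuine eigenvalue. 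Otherwise set $\psi_N=R_{\Lambda_N}\psi$; since the diagonal part of $H_\omega$ commutes with $R_{\Lambda_N}$ and $(H_\omega-E)\psi=0$, one has
\[
\big((H_\omega-E)\psi_N\big)(n)=\lambda^{-1}\sum_{n'\in\Z^d}\mathcal{T}(n,n')\big(\mathbbm{1}_{\Lambda_N}(n)-\mathbbm{1}_{\Lambda_N}(n')\big)\psi(n'),
\]
so only pairs $(n,n')$ straddling $\partial\Lambda_N$ contribute. Using $r>d$ (to sum $\sum_{n}|n-n'|^{-r}$) together with the polynomial bound on $\psi$, one estimates this ``boundary error''; the bookkeeping, combined with a dyadic averaging over the scale $N$ (legitimate because $\|\psi_N\|\le C N^{d+\varepsilon}$ forbids super-polynomial growth, so $\|\psi_{2N}\|\le 2^{d+\varepsilon+1}\|\psi_N\|$ along a sequence $N_j\to\infty$), yields $\|(H_\omega-E)\psi_{N_j}\|=o(\|\psi_{N_j}\|)$ and hence a Weyl sequence $\psi_{N_j}/\|\psi_{N_j}\|$ for $H_\omega-E$. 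It is precisely here that $r-2d>\varepsilon$ is used: the polynomial tails of $\mathcal{T}$ must be beaten against the growth exponent $d/2+\varepsilon$ of $\psi$.

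For the full-measure statement $\nu_\omega(\sigma(H_\omega)\setminus\mathcal{E}_{\omega,\varepsilon})=0$ I would invoke the abstract generalized eigenfunction expansion (Berezanskii; Poerschke--Stolz--Weidmann). Put $s=d/2+\varepsilon$ and $\ell^2_{-s}=\{u\in\C^{\Z^d}:\ \sum_{n}|u(n)|^2\langle n\rangle^{-2s}<\infty\}$. Since $H_\omega$ is bounded self-adjoint on $\ell^2(\Z^d)$ and the natural embedding $\ell^2_{s}\hookrightarrow\ell^2(\Z^d)\hookrightarrow\ell^2_{-s}$ is Hilbert--Schmidt (as $\sum_n\langle n\rangle^{-2s}<\infty$ for $s>d/2$), for $\nu_\omega$-a.e.\ $E$ there exists a nonzero $\psi_E\in\ell^2_{-s}$ with $H_\omega\psi_E=E\psi_E$ in the weak sense. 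A Schur-test computation shows $\mathcal{T}$ maps $\ell^2_{-s}$ boundedly into itself provided $r>s+d$, which is guaranteed by $r-2d>\varepsilon$; hence the eigenvalue equation holds entrywise. Finally, membership in $\ell^2_{-s}$ with $s=d/2+\varepsilon$ forces $|\psi_E(n)|\langle n\rangle^{-(d/2+\varepsilon)}\to0$, so after normalizing $\psi_E$ at a site where it does not vanish (and translating that site to the origin) $\psi_E$ becomes an $\varepsilon$-generalized eigenfunction; thus $E\in\mathcal{E}_{\omega,\varepsilon}$, and $\mathcal{E}_{\omega,\varepsilon}$ carries full $\nu_\omega$-measure.

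The main obstacle, by contrast with the nearest-neighbor case where $[H,R_{\Lambda_N}]$ is supported on $\partial\Lambda_N$ and $\mathcal{T}$ trivially preserves every polynomially weighted space, is exactly the quantitative interplay between the decay rate $r$ of the hopping and the growth rate $d/2+\varepsilon$ of the generalized eigenfunctions: pinning down the sharp boundary-error estimate together with the scale-averaging in the first step, and the sharp weighted mapping property of $\mathcal{T}$ in the second, is where the condition $r-2d>\varepsilon$ is consumed, and everything else is standard. For the complete argument we refer to Han \cite{H19}.
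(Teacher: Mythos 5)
The paper offers no proof of this lemma: it is imported verbatim from Han \cite{H19} (with the normalization $\psi(0)=1$ added in the definition of generalized eigenfunction). Your sketch --- a Weyl-sequence argument for $\mathcal{E}_{\omega,\varepsilon}\subset\sigma(H_\omega)$ in which the boundary commutator $[H_\omega,R_{\Lambda_N}]$ is controlled by a dyadic pigeonhole along scales where $\|\psi_N\|$ grows slowly, combined with the Berezanskii/Poerschke--Stolz--Weidmann generalized eigenfunction expansion in the weighted spaces $\ell^2_{\pm s}$ for the full-measure statement --- is exactly the route Han takes, and it is correct in outline; the bookkeeping you defer is where the threshold $r-2d>\varepsilon$ is consumed, as you say.

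One step as written would fail, however: you propose to repair the normalization $\psi_E(0)=1$ by ``translating that site to the origin.'' A translation by $n_0$ conjugates $H_\omega$ into $H_{\tau_{n_0}\omega}$, a different realization of the random operator; the resulting function is then an $\varepsilon$-generalized eigenfunction of $H_{\tau_{n_0}\omega}$, not of $H_\omega$, so it does not place $E$ in $\mathcal{E}_{\omega,\varepsilon}$ as defined. The standard fix is to run the Poerschke--Stolz--Weidmann expansion with respect to the orthonormal basis $\{\delta_n\}_{n\in\Z^d}$ of $\ell^2(\Z^d)$: for $\nu_\omega$-a.e.\ $E$ one obtains, for every $n$, a polynomially bounded generalized solution that is nonzero at $n$, and one simply divides by $\psi_E(0)$ when that is nonzero. (The paper's insistence on $\psi(0)=1$ is itself a mild imprecision --- what is actually used in the proof of Theorem \ref{mthm} is a nonzero, polynomially bounded solution normalized at some convenient base point --- so this is more a symptom of the statement than a defect of your argument.) Apart from that, the proposal matches the cited proof.
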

\begin{rem}
To prove pure point spectrum of $H_\omega$, it suffices to show  each $E\in\mathcal{E}_{\omega,\varepsilon}$ is indeed an eigenvalue of $H_\omega$. Actually, since all eigenvalues of $H_\omega$ are at most countable, it follows from Lemma \ref{shn} that all spectral measures of $H_\omega$  support on a countable set, and thus are of pure point. In the following we even obtain polynomially decaying of each generalized eigenfunction of $H_\omega$ for $\mathbb{P}$ a.e. $\omega$. This  yields the \textit{power-law} localization.
\end{rem}

In what follows we fix $L_0=\underline{L}_0,\lambda_0,\eta$ and $I=[E_0-\eta,E_0+\eta]$ in Theorem \ref{kthm}.

Recalling Theorem \ref{kthm}, we have for $\lambda\geq\lambda_0$ and $k\geq 0$,
\begin{align}\label{pbe}
\mathbb{P}({\exists}\  E\in I \ {\rm s.t.\ }\  {\rm both} \ \Lambda_{L_k}(m)\  {\rm and}\  \Lambda_{L_k}(n)\ {\rm are\ \ } (E,\delta)-{\rm{\bf bad}})
  \leq L_k^{-2p}
  \end{align}
for all $|m-n|>2L_k$, where $L_{k+1}=[L_{k}^{\alpha}]$ and $L_0\gg1$.

We then prove of our  main result on the \textit{power-law} localization.

\begin{proof}[\bf Proof of Theorem \ref{mthm}]
We choose appropriate parameters satisfying \eqref{para}, \eqref{tr}, $(1+\xi)/\alpha\leq \delta$, and $p>\alpha d+2\alpha p/J$ with $J\in 2\N$. For this purpose, we can set by direct calculation the following
\begin{align*}
\alpha=6, \delta=1/2, \xi=2.
\end{align*}
Let $0<\varepsilon\ll1$ (will be specified later). We define $J_\star=J_\star(d,\varepsilon)$ to be the smallest even integer satisfying $p=6d+\varepsilon$ and $p>6d+\frac{12}{J_\star}p. $
As a consequence, we can set
\begin{align*}
\tau=(14/\rho+1)d+O(\varepsilon+\varepsilon/\rho), s_0=d/2+\varepsilon, \tau'=(42/\rho+11/2)d+O(\varepsilon+\varepsilon/\rho).
\end{align*}
Recalling Remark \ref{rgdec}, we set $\zeta=19/20$. Then for
${r_1}\geq (94/\rho+13)d$, we obtain $\tau'<r_1(19/20-1/2)$ (since $\varepsilon\ll1$). Thus if $\Lambda_L(n)$ is $(E,1/2)$-\textbf{good} and  $L\geq L({r_1},d)>0$, then we have by \eqref{gdec} that
\begin{align}
\label{z1}\|G_{\Lambda_L(n)}(E)\|&\leq L^{(42/\rho+23/4)d+O(\varepsilon+\varepsilon/\rho)},\\
\label{z2}|G_{\Lambda_L(n)}(E)(n',n'')|&\leq | n'-n''|^{-{r_1}/20}\ {\rm for}\ |n'-n''|\geq L/2.
\end{align}

For any $k\geq 0$, we define the set $A_{k+1}=\Lambda_{L_{k+1}}\setminus\Lambda_{2L_k}$ and the event
\begin{align*}{\mathbf{E}}_k: \ \exists\  E\in I \ {\rm s.t.\ both }\  \Lambda_{L_k}\  {\rm and}\  {\Lambda_{L_k}(n)}\ \ {\rm (for\ }{\forall\  n\in A_{k+1})}\ {\rm are\ }\ (E,1/2)-{\rm  {\bf bad}}.\end{align*}
Thus from $p=6d+\varepsilon, \alpha=6$ and \eqref{pbe},
\begin{align*}
\mathbb{P}({\mathbf{E}}_k)&\leq (2L_{k}^6+1)^dL_k^{-2(6d+\varepsilon)}\leq C(d)L_k^{-(6d+2\varepsilon)},\\
\sum_{k\geq 0}\mathbb{P}({\mathbf{E}}_k)&\leq \sum_{k\geq 0} C(d)L_k^{-(6d+2\varepsilon)}<\infty.
\end{align*}
By the Borel-Cantelli Lemma, we have $\mathbb{P}({\mathbf{E}}_k\ {\rm occurs \  infinitely \  often})=0.$
If we set $\Omega_0$ to be the event s.t.  ${\mathbf{E}}_k\ {\rm occurs \  only \ finitely \  often}$, then $\mathbb{P}(\Omega_0)=1.$

Let $E\in I$ be an $\varepsilon_1$-generalized eigenvalue and $\psi$ be its generalized eigenfunction, where $0<\varepsilon_1\ll1$ will be specified later. In particular, $\psi(0)=1$. Suppose now there exist infinitely many $L_k$ so that $\Lambda_{L_k}$ are $(E,1/2)$-\textbf{good}. Then from the Poisson's identity \eqref{pi} and \eqref{z1}--\eqref{z2}, we obtain since ${r_1}\geq (100/\rho+15)d$
\begin{align}
\nonumber1=|\psi(0)|&\leq \sum_{n'\in \Lambda_{L_k}, n''\notin \Lambda_{L_k}}C(d)  |G_{\Lambda_{L_k}}(E)(0,n')|\cdot|n'-n''|^{-r}(1+|n''|)^{d/2+\varepsilon_1}\\
\nonumber&\leq {\rm(I)}+{\rm (II)},
\end{align}
where
\begin{align*}
&{\rm(I)}=\sum_{|n'|\leq L_k/2, |n''|>{L_k}}C(\varepsilon_1,d)L_k^{(42/\rho+23/4)d+O(\varepsilon+\varepsilon/\rho)}(|n''|/2)^{-(100/\rho+15)d}|n''|^{d/2+\varepsilon_1},\\
&{\rm(II)}=\sum_{L_k/2\leq |n'|\leq L_k , |n''|>L_k}C(d)  |n'|^{-(5/\rho+3)d}| n'-n''|^{-(100/\rho+15)d}|n''|^{d/2+\varepsilon_1}.
\end{align*}
For ${\rm(I)}$, we have by \eqref{ldec},
\begin{align*}
{\rm(I)}&\leq C(\varepsilon_1,d)L_k^{(42/\rho+27/4)d+O(\varepsilon+\varepsilon/\rho)}L_k^{-(100/\rho+15-3/2)d/2+O(\varepsilon+\varepsilon/\rho+\varepsilon_1)}\\
&\leq C(\varepsilon_1,d)L_k^{-8d/\rho+O(\varepsilon+\varepsilon/\rho+\varepsilon_1)}\to 0\ ({\rm as \ } L_k\to\infty).
\end{align*}
For ${\rm(II)}$, we have also by \eqref{ldec},
\begin{align*}
{\rm(II)}&\leq C(\varepsilon_1,d)L_k^d\sum_{|n''|>L_k} |n''|^{-(5/\rho+3-1/2)d+O(\varepsilon_1)}\\
&\leq C(\varepsilon_1,d)L_k^{-(5\rho/2+1/4)d+O(\varepsilon_1)}\to 0\ ({\rm as \ } L_k\to\infty).
\end{align*}
This implies that for any $\varepsilon_1$-generalized eigenvalue $E$, there exist only finitely many $L_k$ so that $\Lambda_{L_k}$ are $(E,1/2)$-\textbf{good}.

In the following we fix $\omega\in\Omega_0$.

From the above analysis, for ${r_1}\geq (100/\rho+15)d$,  we have shown there exists $k_0(\omega)> 0$ such that for $k\geq k_0$ all $\Lambda_{L_k}(n)$ with $n\in A_{k+1}$ are $(E,1/2)$-\textbf{good}. We define another set $\widetilde A_{k+1}=\Lambda_{L_{k+1}/10}\setminus \Lambda_{10 L_k}$. Obviously, $\widetilde A_{k+1}\subset A_{k+1}$. We will show for ${r}\geq\max\{(100/\rho+23)d, 331d\}$, $\varepsilon,\varepsilon_1\ll1$  and $k\geq k_1=k_1(\kappa,\mu,\rho,{r},d,\omega)>0$ the following holds true:
 \begin{align}\label{dec}
|\psi(n)|\leq |n|^{-{r}/600}\ {\rm for\ } n\in\widetilde A_{k+1}.
\end{align}
 Once \eqref{dec} was established for all $k\geq k_1$, it follows from $\bigcup_{k\geq k_1}\widetilde A_{k+1}=\{n\in{\Z}^d:\ |n|\geq 10L_{k_1}\}$ that $|\psi(n)|\leq {|n|^{-{r}/600}}$ for all $|n|\geq 10L_{k_1}$. This implies that $H_\omega$ exhibits the \textit{power-law} localization on $I$. In order to finish the proof of Theorem \ref{mthm}, it suffices to cover $[-\|\mathcal{T}\|-M,\|\mathcal{T}\|+M]$ by intervals of length $\eta$.

We let $r=r_1+8d$.

We try to prove \eqref{dec}. Notice that $\omega\in\Omega_0$ and $n\in \widetilde A_{k+1}\subset A_{k+1}$. We know $\Lambda_{L_k}(n)\subset A_{k+1}$ is $(E,1/2)$-\textbf{good}.  Then recalling \eqref{pi} again, we have
\begin{align*}
|\psi(n)|&\leq \sum_{n'\in \Lambda_{L_k}(n), n''\notin \Lambda_{L_k}(n)}C(d)  |G_{\Lambda_{L_k}(n)}(E)(n,n')|\cdot|n'-n''|^{-r}(1+|n''|)^{d/2+\varepsilon_1}\\
&\leq {\rm (III)}+{\rm (IV)},
\end{align*}
where
\begin{align*}
&{\rm (III)}\\
&=\sum_{|n'-n|\leq L_k/2, |n''-n|>{L_k}}C(d)L_k^{(42/\rho+23/4)d+O(\varepsilon+\varepsilon/\rho)}(|n''-n|/2)^{-r}(1+L_{k+1}+|n''-n|)^{d/2+\varepsilon_1},\\
&{\rm (IV)}=\sum_{L_k/2\leq |n'-n|\leq L_k , |n''-n|>{L_k}}C(d)  |n'-n|^{-{r_1}/20}|n'-n''|^{-r}(1+L_{k+1}+|n''-n|)^{d/2+\varepsilon_1}.
\end{align*}
For {\rm (III)}, we have by \eqref{ldec},
\begin{align*}
{\rm (III)}&\leq C(\varepsilon_1,r,d)L_{k+1}^{d/2+\varepsilon_1}L_k^{(42/\rho+27/4)d+O(\varepsilon+\varepsilon/\rho)}\sum_{|n''-n|>L_k} |n''-n|^{-r+d/2+\varepsilon_1}\\
&\leq C(\varepsilon_1,r,d)L_k^{(42/\rho+39/4)d+O(\varepsilon+\varepsilon/\rho+\varepsilon_1)} L_k^{(-r+3d/2+\varepsilon_1)/2}\\
&\leq C(\varepsilon_1,r,d)L_k^{-r/2+(42/\rho+21/2)d+O(\varepsilon+\varepsilon/\rho+\varepsilon_1)}.
\end{align*}
For {\rm (IV)}, we also have  by \eqref{ldec},
\begin{align*}
{\rm (IV)}&\leq C(\varepsilon_1,{r_1},d)L_{k+1}^{d/2+\varepsilon_1}L_k^d\sum_{|n''-n|>L_k} |n''-n|^{-{r_1}/20+d/2+\varepsilon_1}\\
&\leq C(\varepsilon_1,{r_1},d)L_k^{4d+6\varepsilon_1} L_k^{(-{r_1}/20+3d/2+\varepsilon_1)/2}\\
&\leq C(\varepsilon_1,{r_1},d)L_k^{-{r_1}/40+19d/4+7\varepsilon_1}.
\end{align*}
Combining the above estimates and since ${r}\geq\max\{(100/\rho+23)d, 331d\}$, we have
\begin{align*}
|\psi(n)|\leq C(\varepsilon_1,{r_1},d)L_k^{(-{r_1}/40+19d/4+16\varepsilon+7\varepsilon_1)/6} \leq |n|^{-{r}/600},
\end{align*}
where we use $|L_k|\geq |n|^{1/6}\gg1$ for $n\in \widetilde{A}_{k+1}$, and $\varepsilon,\varepsilon_1\ll1$.

\end{proof}

\section*{Acknowledgements}
I would like to thank  Svetlana Jitomirskaya for reading earlier versions of the paper
and her constructive  suggestions.   The author is grateful to Xiaoping Yuan  for his  encouragement.
This work was supported by NNSF of China  grant 11901010.

\appendix


\section{}
We  introduce a useful lemma.
\begin{lem}
Let $L>2$ with $L\in\N$ and $\Theta-d>1$. Then we have
\begin{align}\label{ldec}
\sum_{n\in{\Z}^d:\ |n|\geq L}|n|^{-\Theta}\leq C(\Theta,d)L^{-(\Theta-d)/2},
\end{align}
where $C(\Theta,d)>0$ depends only on $\Theta,d$.
\end{lem}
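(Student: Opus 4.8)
The plan is to exploit the elementary factorisation $|n|^{-\Theta}=|n|^{-(\Theta-d)/2}\cdot|n|^{-(\Theta+d)/2}$ and to peel off the first factor uniformly over the tail region. First I would note that for every $n\in\Z^d$ with $|n|\ge L$ one has $|n|^{-(\Theta-d)/2}\le L^{-(\Theta-d)/2}$, which is legitimate since $\Theta-d>1>0$. Substituting this into the sum gives
\[
\sum_{|n|\ge L}|n|^{-\Theta}\ \le\ L^{-(\Theta-d)/2}\sum_{|n|\ge L}|n|^{-(\Theta+d)/2}\ \le\ L^{-(\Theta-d)/2}\sum_{n\in\Z^d\setminus\{0\}}|n|^{-(\Theta+d)/2}.
\]

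Next I would verify that the residual series $S:=\sum_{n\in\Z^d\setminus\{0\}}|n|^{-(\Theta+d)/2}$ converges and depends only on $\Theta$ and $d$; then setting $C(\Theta,d):=S$ finishes the proof. For convergence, group the lattice points according to the value $|n|=k\ge1$ (recall $|n|=\max_i|n_i|$): the number of such points is $(2k+1)^d-(2k-1)^d\le C(d)k^{d-1}$, so $S\le C(d)\sum_{k\ge1}k^{d-1}k^{-(\Theta+d)/2}=C(d)\sum_{k\ge1}k^{-1-(\Theta-d)/2}$, and the exponent satisfies $1+(\Theta-d)/2>1$ because $\Theta-d>1$; hence the one-dimensional series converges, e.g. by comparison with $\int_1^\infty x^{-1-(\Theta-d)/2}\,dx$.

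There is no genuine obstacle here: the argument is entirely elementary, and the only point needing a little care is the estimate $(2k+1)^d-(2k-1)^d\le C(d)k^{d-1}$ for the number of lattice points on the sup-norm sphere $\{|n|=k\}$, together with the resulting summability, both immediate from $\Theta-d>1$. I note that the same grouping gives directly the stronger bound $\sum_{|n|\ge L}|n|^{-\Theta}\le C(d)\sum_{k\ge L}k^{d-1-\Theta}\le C(\Theta,d)L^{-(\Theta-d)}$, but the stated form with exponent $(\Theta-d)/2$ is all that is used elsewhere, and the factorisation argument above delivers it most transparently.
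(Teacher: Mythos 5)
Your proof is correct, and it takes a genuinely different route from the paper's. You factorize the summand as $|n|^{-\Theta}=|n|^{-(\Theta-d)/2}\cdot|n|^{-(\Theta+d)/2}$, extract the first factor uniformly on the tail, and then appeal to convergence of the full residual series $\sum_{n\neq0}|n|^{-(\Theta+d)/2}$. The paper instead groups by $|n|=j$, shifts the index so that the tail sum starts at $l=1$, and then applies the AM--GM-type inequality $l+(L-1)\geq 2\sqrt{l(L-1)}$ to split the exponent $-(\Theta-d+1)$ into two equal halves, one of which produces the power of $L$. Both arguments are elementary and rely on the same shell count $\#\{|n|=j\}\leq C(d)j^{d-1}$; yours avoids the index-shift-and-AM--GM manoeuvre and is arguably more transparent. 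You are also right that a direct tail estimate $\sum_{k\geq L}k^{d-1-\Theta}\lesssim L^{-(\Theta-d)}$ gives a stronger power (the paper's own chain of inequalities actually lands on $L^{-(\Theta-d+1)/2}$, also stronger than what the lemma states), but the stated exponent $(\Theta-d)/2$ is all that is used, so either formulation suffices.
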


\begin{proof}
Obviously, $\#\{n\in{\Z}^d:\ |n|=j\}\leq 2dj^{d-1}$ for any $j\in \N$. Thus
\begin{align*}
\sum_{n\in{\Z}^d:\ |n|\geq L}|n|^{-\Theta}&\leq \sum_{j\geq L}\sum_{|n|=j}|n|^{-\Theta}\leq 2d\sum_{j\geq L}j^{-(\Theta-d+1)}\\
&\leq 2d \sum_{l\geq 1}(l+L-1)^{-(\Theta-d+1)}\\
&\leq 2d \sum_{l\geq 1}(2l(L-1))^{-(\Theta-d+1)/2}\ ({\rm for}\ l+(L-1)\geq 2\sqrt{l(L-1)})\\
&\leq {(2L-2)}^{-(\Theta-d+1)/2}2d\sum_{l\geq 1}l^{-(\Theta-d+1)/2}\\
&\leq C(\Theta,d)L^{-(\Theta-d+1)/2},
\end{align*}
where in the last inequality we use $2L-2>L, -(\Theta-d+1)/2<-1$ and
\begin{align*}
C(\Theta,d)=2d\sum_{l\geq 1}l^{-(\Theta-d+1)/2}<\infty.
\end{align*}

\end{proof}

\bibliographystyle{alpha}

\begin{thebibliography}{FMSS85}

\bibitem[Aiz94]{Aiz94}
M.~Aizenman.
\newblock Localization at weak disorder: some elementary bounds.
\newblock {\em Rev. Math. Phys.}, 6(5A):1163--1182, 1994.
\newblock Special issue dedicated to Elliott H. Lieb.

\bibitem[AM93]{AM93}
M.~Aizenman and S.~Molchanov.
\newblock Localization at large disorder and at extreme energies: an elementary
  derivation.
\newblock {\em Comm. Math. Phys.}, 157(2):245--278, 1993.

\bibitem[And58]{And58}
P.~W. Anderson.
\newblock Absence of diffusion in certain random lattices.
\newblock {\em Physical review}, 109(5):1492, 1958.

\bibitem[AW15]{AWB}
M.~Aizenman and S.~Warzel.
\newblock {\em Random operators}, volume 168 of {\em Graduate Studies in
  Mathematics}.
\newblock American Mathematical Society, Providence, RI, 2015.
\newblock Disorder effects on quantum spectra and dynamics.

\bibitem[BB13]{BB13}
M.~Berti and P.~Bolle.
\newblock Quasi-periodic solutions with {S}obolev regularity of {NLS} on {$\Bbb
  T^d$} with a multiplicative potential.
\newblock {\em J. Eur. Math. Soc. (JEMS)}, 15(1):229--286, 2013.

\bibitem[BH80]{BH80}
G.~Brown and E.~Hewitt.
\newblock Continuous singular measures with small {F}ourier-{S}tieltjes
  transforms.
\newblock {\em Adv. in Math.}, 37(1):27--60, 1980.

\bibitem[BK05]{BK05}
J.~Bourgain and C.~E. Kenig.
\newblock On localization in the continuous {A}nderson-{B}ernoulli model in
  higher dimension.
\newblock {\em Invent. Math.}, 161(2):389--426, 2005.

\bibitem[CKM87]{CKM87}
R.~Carmona, A.~Klein, and F.~Martinelli.
\newblock Anderson localization for {B}ernoulli and other singular potentials.
\newblock {\em Comm. Math. Phys.}, 108(1):41--66, 1987.

\bibitem[DLS85]{DLS85}
F.~Delyon, Y.~L\'{e}vy, and B.~Souillard.
\newblock Anderson localization for multidimensional systems at large disorder
  or large energy.
\newblock {\em Comm. Math. Phys.}, 100(4):463--470, 1985.

\bibitem[DS20]{DS19}
J.~Ding and C.~K. Smart.
\newblock Localization near the edge for the {A}nderson {B}ernoulli model on
  the two dimensional lattice.
\newblock {\em Invent. Math.}, 219(2):467--506, 2020.

\bibitem[FMSS85]{FMSS85}
J.~Fr\"{o}hlich, F.~Martinelli, E.~Scoppola, and T.~Spencer.
\newblock Constructive proof of localization in the {A}nderson tight binding
  model.
\newblock {\em Comm. Math. Phys.}, 101(1):21--46, 1985.

\bibitem[FS83]{FS83}
J.~Fr\"{o}hlich and T.~Spencer.
\newblock Absence of diffusion in the {A}nderson tight binding model for large
  disorder or low energy.
\newblock {\em Comm. Math. Phys.}, 88(2):151--184, 1983.

\bibitem[GMP77]{GMP77}
I.~Goldseid, S.~Molchanov, and L.~Pastur.
\newblock A random homogeneous {S}chr\"{o}dinger operator has a pure point
  spectrum.
\newblock {\em Funct. Anal. Appl.}, 11(1):1--10, 96, 1977.

\bibitem[Han19]{H19}
R.~Han.
\newblock Shnol's theorem and the spectrum of long range operators.
\newblock {\em Proc. Amer. Math. Soc.}, 147(7):2887--2897, 2019.

\bibitem[Jit99]{Jit99}
S.~Jitomirskaya.
\newblock Metal-insulator transition for the almost {M}athieu operator.
\newblock {\em Ann. of Math. (2)}, 150(3):1159--1175, 1999.

\bibitem[JM99]{JM99}
V.~Jak{s}i{c} and S.~Molchanov.
\newblock Localization for one-dimensional long range random {H}amiltonians.
\newblock {\em Rev. Math. Phys.}, 11(1):103--135, 1999.

\bibitem[JZ19]{JZ19}
S.~Jitomirskaya and X.~Zhu.
\newblock Large deviations of the {L}yapunov exponent and localization for the
  1{D} {A}nderson model.
\newblock {\em Comm. Math. Phys.}, 370(1):311--324, 2019.

\bibitem[Kir08]{K08}
W.~Kirsch.
\newblock An invitation to random {S}chr\"{o}dinger operators.
\newblock In {\em Random {S}chr\"{o}dinger operators}, volume~25 of {\em Panor.
  Synth\`eses}, pages 1--119. Soc. Math. France, Paris, 2008.
\newblock With an appendix by Fr\'{e}d\'{e}ric Klopp.

\bibitem[Kle93]{Kle94}
A.~Klein.
\newblock Localization in the {A}nderson model with long range hopping.
\newblock {\em Braz. J. Phys.}, 23:363--371, 1993.

\bibitem[Kri98]{Kri98}
T.~Kriecherbauer.
\newblock Estimates on {G}reen's functions of quasi-periodic matrix operators
  and a new version of the coupling lemma in the {F}r\"{o}hlich-{S}pencer
  technique.
\newblock {\em Internat. Math. Res. Notices}, (17):907--935, 1998.

\bibitem[Pas80]{Pas80}
L.~Pastur.
\newblock Spectral properties of disordered systems in the one-body
  approximation.
\newblock {\em Comm. Math. Phys.}, 75(2):179--196, 1980.

\bibitem[Shi]{Shifc}
Y.~Shi.
\newblock A multi-scale analysis method for quasi-peridoic operators with the
  polynomial long-range hopping and its applications.\ \textit{{I}n
  preparation}, 2021.

\bibitem[SW86]{SW86}
B.~Simon and T.~Wolff.
\newblock Singular continuous spectrum under rank one perturbations and
  localization for random {H}amiltonians.
\newblock {\em Comm. Pure Appl. Math.}, 39(1):75--90, 1986.

\bibitem[vDK89]{vDK89}
H.~von Dreifus and A.~Klein.
\newblock A new proof of localization in the {A}nderson tight binding model.
\newblock {\em Comm. Math. Phys.}, 124(2):285--299, 1989.

\end{thebibliography}

 \end{document}